\documentclass[reqno,12pt,letterpaper]{amsart}
\usepackage{amsmath,amssymb,amsthm,graphicx,mathrsfs,url}
\usepackage{enumitem}

\usepackage{graphicx}
\usepackage{amsmath, amscd}
\usepackage[usenames,dvipsnames]{color}
\usepackage[colorlinks=true, linkcolor=Red, citecolor=Green]{hyperref}



\DeclareGraphicsRule{*}{mps}{*}{}

\newtheorem{theo}{Theorem}[section]
\newtheorem{prop}{Proposition}[section]

\newtheorem{lemm}[prop]{Lemma}
\theoremstyle{definition}

\newtheorem{rem}[prop]{Remark}

\numberwithin{equation}{section}

\newcommand{\R}{\mathbb{R}}
\newcommand{\N}{\mathbb{N}}
\newcommand{\Z}{\mathbb{Z}}
\newcommand{\C}{\mathbb{C}}
\newcommand{\dd}{\mathrm{d}}
\newcommand{\e}{\mathrm{e}}
\newcommand{\wt}{\widetilde}
\newcommand{\ubf}{\mathbf{u}}
\newcommand{\Pbf}{\mathbf{P}}
\newcommand{\Rbf}{\mathbf{R}}

\let\Re=\Real\def\clos{{\rm clos}\:}
\def\Res{{\rm Res}\:}

\DeclareMathOperator{\tr}{tr}
\DeclareMathOperator{\id}{Id}

\let\Im=\Imag
\def\C{\mathbb {C}}
\def\ep{\epsilon}

\def\tg{T_{\gamma}}
\def\ag{a_{\gamma}}
\def\P{{\rm P}}

\title[On the number of poles]{On the number of poles of the dynamical zeta functions for billiard flow}

\author[V.Petkov]{Vesselin Petkov}
\address{Université de Bordeaux, Institut de Mathématiques de Bordeaux, 351, Cours de la Libération, 33405 Talence, France}
\email{petkov@math.u-bordeaux.fr}

\begin{document}

\maketitle
\begin{abstract} We study the number of the poles of the meromorphic continuation of the dynamical zeta functions $\eta_N$ and $\eta_D$ for several strictly convex disjoint obstacles satisfying non-eclipse condition. For $\eta_N$ we obtain a strip $\{z \in \C:\: \Re s  > \beta\}$ with infinitely many poles. For $\eta_D$ we prove the same result assuming the boundary real analytic. Moreover, for $\eta_N$ we obtain a characterisation of $\beta$ by the pressure $\P(2G)$ of some function $G$ on the space $\Sigma_A^f$  related to the dynamical characteristics of the obstacle.

\end{abstract}
\section{Introduction}
Let $D_1, \dots, D_r \subset \R^d,\: {r \geqslant 3},\: d \geqslant 2,$ be compact strictly convex disjoint obstacles with $C^{\infty}$ smooth boundary and let $D = \bigcup_{j= 1}^r D_j.$  We assume that every $D_j$ has non-empty interior and throughout this paper we suppose the following non-eclipse condition
\begin{equation}\label{eq:1.1}
D_k \cap {\rm convex}\: {\rm hull} \: ( D_i \cup D_j) = \emptyset, 
\end{equation} 
for any $1 \leqslant i, j, k \leqslant r$ such that $i \neq k$ and $j \neq k$.
Under this condition all periodic trajectories for the billiard flow in $\Omega  = \R^d \setminus \mathring{D}$ are ordinary reflecting ones without tangential intersections to the boundary of $D$.  We consider the (non-grazing) billiard flow {$\varphi_t$} (see Section 2 for the definition).  Next the periodic trajectories will be called periodic rays. For any periodic ray $\gamma$, denote by  $\tau(\gamma) > 0$ its period, by $\tau^\sharp(\gamma) > 0$ its primitive period, and by $m(\gamma)$ the number of reflections of $\gamma$ at the obstacles. Denote by  $P_\gamma$ the associated linearized Poincar\'e map (see section 2.3 in \cite{petkov2017geometry} and Section 2 for the definition). Let $\mathcal{P}$ be the set of all oriented periodic rays. Notice that some periodic rays have only one orientation, while others admits two (see \cite[\S2.3]{chaubet2022} for more details). Let $\Pi$ be the set of all primitive periodic rays. Then the counting function of the lengths of periodic rays satisfies 
\begin{equation} \label{eq:1.2}
\sharp\{\gamma \in \Pi:\: \tau^\sharp(\gamma) \leqslant x\} \sim \frac{\e^{h x}}{h x} , \quad x \to + \infty,
\end{equation}
for some $h > 0$ (see for instance, \cite[Theorem 6.5] {Parry1990} for weakly mixing suspension symbolic flow and \cite{ikawa1990poles}, \cite{morita1991symbolic}). Hence there exists an infinite number of primitive periodic trajectories and applying (\ref{eq:1.2}), for every sufficiently small $\ep > 0$ one obtains  the estimate
\[e^{(h - \ep)x} \leq \sharp\{\gamma \in \mathcal P:\: \tau(\gamma) \leqslant x\} \leqslant \e^{(h + \ep) x}, \: x \geq C_{\ep} \gg 1.\]

Moreover, for some positive constants $c_1, C_1, f_1, f_2$ we have (see for instance \cite[Appendix]{petkov1999zeta} and (\ref{eq:A.1}))
\[c_1\e^{f_1 \tau(\gamma)} \leqslant |\det(\mathrm{Id} - P_{\gamma})| \leqslant C_1\e^{f_2 \tau(\gamma)}, \quad \gamma \in \mathcal{P}.
\]
By using these estimates, define for $\Re(s) \gg 1$ two Dirichlet series
\[
\eta_\mathrm{N}(s) = \sum_{\gamma \in \mathcal{P}} \frac{\tau^\sharp(\gamma)\e^{-s\tau(\gamma)}}{|\det(\mathrm{Id}-P_\gamma)|^{1/2} }, \quad \eta_\mathrm{D}(s) = \sum_{\gamma \in \mathcal{P}} (-1)^{m(\gamma)} \frac{ \tau^\sharp(\gamma) \e^{-s\tau(\gamma)}}{|\det(\mathrm{Id}-P_\gamma)|^{1/2} },
\]
where the sums run over all oriented periodic rays. The length $\tau^{\sharp}(\gamma)$, the period $\tau(\gamma)$ and $|\det(\mathrm{Id} - P_{\gamma})|^{1/2}$ are independent of the orientation of $\gamma.$ We consider also for $q \geq 1, \: q \in \N,$ the zeta function
\[\eta_\mathrm{q}(s) = q\sum_{\gamma \in \mathcal{P}, m(\gamma) \in q \N} \frac{\tau^\sharp(\gamma)\e^{-s\tau(\gamma)}}{|\det(\mathrm{Id}-P_\gamma)|^{1/2} },\Re s \gg 1.\]
Clearly, $\eta_N(s) = \eta_1(s).$ These zeta functions are important for the analysis of the distribution of the scattering resonances related to the Laplacian in $\R^d \setminus \bar{D}$ with Dirichlet and Neumann boundary conditions on $\partial D$ (see \cite[\S1]{chaubet2022} for more details).

It was proved in \cite[Theorem 1.1 and Theorem 4.1]{chaubet2022} that $\eta_q$  admit a meromorphic continuation to $\C$ with simple poles and integer residues. We have the equality
\begin{equation} \label{eq:1.3}
\eta_D(s) = \eta_2(s) - \eta _1(s),\: \Re s \gg 1,
\end{equation}
hence $\eta_D$ admits also a meromorphic continuation to $\C$ with simple poles and integer residues.
 The functions $\eta_q(s)$ are Dirichlet series with positive coefficients and by a classical theorem of Landau (see for instance, \cite[Théorème 1, Chapitre IV]{bernstein1933}) they have a pole $s = a_q$, where $a_q$ is the abscissa of convergence of $\eta_q(s)$. On the other hand, from (\ref{eq:1.3}) it follows that some cancelations of  poles are possible. In this direction, for $d = 2$  \cite{stoyanov2001spectrum} and for $d\geqslant 3$ under some conditions  \cite{stoyanov2012non} Stoyanov proved that there exists $\varepsilon > 0$ such that $\eta_{\mathrm D}(s)$ is analytic for $\Re s \geqslant a_1 - \varepsilon.$ The same result has been proved for $d = 3$ and $a_1 > 0$ by Ikawa \cite{ikawa2000resonances}.
      
      The purpose of this paper is to prove that $\eta_q(s)$ has an infinite number of poles and to estimate $\beta \in \R$  such that the number of poles with $\Re s > \beta$ is infinite. The same questions are more difficult for $\eta_D(s)$ since the existence of at least one pole has been established only for obstacles with real analytic boundary  \cite[Theorem 1.3]{chaubet2022} and for obstacles with sufficiently small diameters \cite{ikawa1990zeta}, \cite{stoyanov2009poles}. Clearly, $a_q \leq a_1$. We have $a_2 = a_1$, since if $a_2 < a_1$, the function $\eta_D$ will have a singularity at $a_1$ which is impossible because $\eta_D$ is analytic for $\Re s \geq a_1$ (see \cite[Theorem 1]{petkov1999zeta}).

 Denote by $\Res \eta_q$, $\Res \eta_D$ the set of poles of $\eta_q$ and $\eta_D$, respectively.       
 We prove the following theorem.
      
      \begin{theo} For every $0 < \delta < 1$ there exists $\alpha_{\delta, q} < a_q$ such that for $\alpha < \alpha_{\delta, q}$ we have
\begin{equation} \label{eq:1.4}
\sharp \{ \mu_j \in \Res \eta_q: \: \Re \mu_j \geq  {\alpha}, \: |\mu_j | \leqslant r \} \neq  {\mathcal O}(r^{\delta}).
\end{equation} 
If $\eta_D$ cannot be prolonged as entire function, the same result holds for $\Res \eta_D.$
      \end{theo}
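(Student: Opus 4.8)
The proof will proceed by contradiction, combining the structure of $\eta_q$ as the logarithmic derivative of an entire function of finite order with a growth lower bound along vertical lines dictated by the pressure $\P(2G)$. \emph{Step 1 (analytic set-up).} I would first record that $\eta_q = d_q'/d_q$, where $d_q$ is the dynamical determinant attached to the symbolic model on $\Sigma_A^f$: it is entire of some finite order $\rho_0$, its zeros (with multiplicities) are exactly the poles of $\eta_q$ with their residues, so the residues are positive integers, and by Landau's theorem $a_q$ is a zero of $d_q$ with $\Res_{a_q}\eta_q = m_q\geq 1$. Nonnegativity of the Dirichlet coefficients gives $\Re\mu_j\leq a_q$ for every $\mu_j\in\Res\eta_q$.

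\emph{Step 2 (removing the poles in a right half-plane).} Set $\alpha_{\delta,q}:=\beta$, where $2\beta$ is the abscissa of convergence of $\sum_\gamma (\tau^\sharp(\gamma))^2\e^{-s\tau(\gamma)}/|\det(\mathrm{Id}-P_\gamma)|$; by the thermodynamic formalism on $\Sigma_A^f$ this abscissa is $\P(2G)$ (after the normalisation fixing $G$), and $\beta<a_q$. Suppose, contrary to the theorem, that for some $0<\delta<1$ and some $\alpha<\beta$ the left side of \eqref{eq:1.4} is $\mc O(r^\delta)$. Then $\{\mu_j:\Re\mu_j\geq\alpha\}$ has exponent of convergence $\leq\delta<1$, so the genus-zero product $P(s)=\prod_{\Re\mu_j\geq\alpha}(1-s/\mu_j)$ is entire of order $\leq\delta$, and $g:=d_q/P$ is entire of order $\leq\rho_0$ with no zeros in $\{\Re s\geq\alpha\}$. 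Writing $g=\e^{\varphi}$ there and applying the Borel--Carathéodory inequality in a half-plane together with a Cauchy estimate, $|g'/g(s)|\leq C(1+|s|)^{\rho_0+1}$ for $\Re s\geq\alpha+\ep_0$; since $\eta_q=g'/g+P'/P$ with $P'/P$ controlled away from the $\mu_j$, one gets $|\eta_q(s)|\leq C(1+|s|)^{\rho_0+1}$ for $\Re s\geq\alpha+\ep_0$ and $\mathrm{dist}(s,\Res\eta_q)\geq (1+|s|)^{-2}$, with $C$ uniform on compact subsets of $\{\Re s>\alpha\}$.

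\emph{Step 3 (the contradiction).} Fix $\sigma$ with $\alpha+\ep_0<\sigma<\beta$ avoiding the real parts of the $\mu_j$ by $\gtrsim (1+|\mu_j|)^{-2}$. A smoothed Perron formula, with the contour shift to $\Re s=\sigma$ justified by Step 2, gives for $T\to+\infty$
\[
\sum_{\tau(\gamma)\leq T}\frac{\tau^\sharp(\gamma)\,\e^{-\sigma\tau(\gamma)}}{|\det(\mathrm{Id}-P_\gamma)|^{1/2}}\,\e^{-\ii t\tau(\gamma)}=\frac{m_q\,\e^{(a_q-\sigma-\ii t)T}}{a_q-\sigma-\ii t}+E_T(t),
\]
where $E_T(t)=\eta_q(\sigma+\ii t)+\sum_{\mu_j\neq a_q,\ \Re\mu_j>\sigma}\frac{m_j\e^{(\mu_j-\sigma-\ii t)T}}{\mu_j-\sigma-\ii t}+O(\e^{\sigma T})$ (weak mixing of the suspension flow on $\Sigma_A^f$ excludes poles on $\Re s=a_q$ other than $a_q$ itself). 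Testing against a fixed band-limited weight $w\geq 0$ and integrating in $t$, the left side has squared $L^2_w$-norm $\asymp \sum_{\tau(\gamma)\leq T}(\tau^\sharp(\gamma))^2\e^{-2\sigma\tau(\gamma)}/|\det(\mathrm{Id}-P_\gamma)|$ (the lengths $\tau(\gamma)$ being uniformly separated), which grows like $\e^{(2\beta-2\sigma)T}$ since $\sigma<\beta=\alpha_{\delta,q}$; on the right side, after subtracting the explicit leading-pole term, $\eta_q(\sigma+\ii t)$ contributes only polynomially by Step 2 and the sum over $\mu_j\neq a_q$ is controlled by the pressure estimate on $\Sigma_A^f$. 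Balancing the truncation level against $\e^{(a_q-\sigma)T}$ then forces $\sum_{\tau(\gamma)\leq T}(\tau^\sharp(\gamma))^2\e^{-2\sigma\tau(\gamma)}/|\det(\mathrm{Id}-P_\gamma)|$ to be at most polynomial in $T$, contradicting $\sigma<\beta$. For $\eta_D$: if it is not entire it has a pole, hence $\eta_D=e_D'/e_D$ with $e_D$ entire of finite order and at least one zero; squaring the coefficients removes the signs $(-1)^{m(\gamma)}$, so the quadratic functional is governed by the same pressure $\P(2G)$, and Step 3 applies with a pole of $\eta_D$ of maximal real part in place of $a_q$ (alternatively one uses $\eta_D=\eta_2-\eta_1$ and the result for $\eta_1,\eta_2$).

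The delicate point is Step 3: one must separate, on the lines $\Re s=\sigma$, the genuine entropy-type quadratic growth governed by $\P(2G)$ from the contribution of the leading pole $a_q$ and, more seriously, from the poles $\mu_j$ that may accumulate to the line $\Re s=a_q$ from the left. This is precisely where the exact pressure identity on $\Sigma_A^f$, the weak mixing of the suspension flow, and the uniform polynomial bound of Step 2 must be used together; the rest of the argument (the Weierstrass product, the Borel--Carathéodory step, the Perron formula and contour shift) is routine once that separation is in place.
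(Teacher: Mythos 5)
Your proposal takes a genuinely different route — Hadamard factorisation plus Borel--Carath\'eodory plus a Perron/$L^2$ argument, rather than the paper's microlocal local trace formula tested against dilated bump functions — but it has several gaps, one of which is fatal for the $\eta_D$ part.

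\textbf{The $\eta_D$ case is broken.} You write that ``squaring the coefficients removes the signs $(-1)^{m(\gamma)}$.'' That is true only on the diagonal $\gamma=\gamma'$. The quadratic functional you need to lower-bound is
$\sum_{\gamma,\gamma'}a_\gamma a_{\gamma'}(-1)^{m(\gamma)+m(\gamma')}\hat w(\tau(\gamma)-\tau(\gamma'))$, and even with a positive-definite weight ($\hat w\ge 0$), the off-diagonal terms carry the factor $(-1)^{m(\gamma)+m(\gamma')}$ and can be negative, so the diagonal does \emph{not} give a lower bound. This is exactly why the paper must appeal to Ikawa's Proposition~2.3 for $\eta_D$, which directly supplies (under the non-entireness hypothesis) sequences $(\ell_j)$, $(m_j)$ with $|\langle \mathcal F_D,\rho_j\rangle|\ge \e^{-\alpha_0\ell_j}$. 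That input is not optional; your argument has no substitute for it.

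\textbf{Secondary gaps.} (i) The justification of the diagonal approximation for $\eta_q$ — ``the lengths $\tau(\gamma)$ being uniformly separated'' — is false: the counting asymptotics $\sharp\{\tau^\sharp\le x\}\sim \e^{hx}/(hx)$ force the lengths to be exponentially dense; the correct argument (used in the paper's proof of Theorem~1.2) takes a Gaussian weight with $\hat w\ge 0$ and gets only a one-sided inequality, valid precisely because the coefficients of $\eta_q$ are nonnegative. (ii) Your threshold $\alpha_{\delta,q}=\beta$ with $2\beta$ the abscissa of the quadratic series is independent of $\delta$; the statement only asserts the existence of a $\delta$-dependent threshold, and the paper's $\alpha_{\delta,q}$ is of order $-A$ with $A\to\infty$ as $\delta\to 1$. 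Moreover $\beta<a_q$ is not proved: with $b_1=\P(2G)$ and $a_1=\P(G)$, convexity only gives $b_1\ge 2a_1-h$, which does not force $b_1/2<a_1$ in general. (iii) The claim that $d_q$ is entire of finite order ``attached to the symbolic model on $\Sigma_A^f$'' is not available for $C^\infty$ boundaries; the finite-order/polynomial pole-count bound is precisely (3.1) from \cite{jin2023resonances} and requires the anisotropic-Sobolev machinery, so you are implicitly relying on the same microlocal input as the paper. In effect, your Steps~1--2 re-derive a weaker form of the local trace formula $(3.4)_q$ via classical function theory, and Step~3 blends the strategies of the paper's Theorem~1.1 and Theorem~1.2 without supplying the lower bound that makes either one close.
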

More precisely,  we show that for any $0 < \delta < 1$ there exists $\alpha_{\delta, q} < 0$ depending on the dynamical characteristics of $D$ such that if $\alpha < \alpha_{\delta, q}$, for 
 any constant $0 < C < \infty$ the estimate
\[\sharp \{ \mu_j \in \Res \eta_q: \: \Re \mu_j  \geq \alpha, \: |\mu_j | \leqslant r \} \leqslant C r^{\delta},\quad r \geqslant 1\]
does not hold. Similar results have been proved for Pollicott-Ruelle resonances for Anosov flows \cite[Theorem 2]{zworski2017local} and  for Axiom A flows \cite[Theorem 4.1] {jin2023resonances}. For obstacles $D$ satisfying (\ref{eq:1.1}) the same result for scattering resonances has been proved for Neumann problem in  \cite{petkov2002poles} and for Dirichlet problem and real analytic boundary in \cite[Theorem 1.3]{chaubet2022}. According to Theorem 1.1, it follows that for large $A > 0$ in the region ${\mathcal D}_A = \{z \in \C: \Re z > -A\}$ there are infinitely many poles $\mu \in \Res \eta_1 \cap {\mathcal D}_A$ and infinitely many poles $\nu \in \Res \eta_2 \cap {\mathcal D}_A.$ Therefore if $\eta_D$ is analytic in ${\mathcal D}_A$, by (\ref{eq:1.3}) we deduce that we must have an {\it infinite number of cancellations} of poles $\mu$ with poles $\nu$ and the corresponding residues of the cancelled poles $\mu$ and $\nu$ must coincide.  
    \begin{rem} The proof of Theorem 1.1 for $\eta_D$ works under the condition that there exist two sequences  $(\ell_j), (m_j)$ with $\ell_j \to \infty,\: m_j \to \infty$ as $j \to \infty$  with the properties in Proposition 4.1. If  $\eta_D$ cannot be prolonged as entire function, the  existence of such sequences  has been established by Ikawa (see \cite[Prop. 2.3]{ikawa1990poles}). In Theorem 1.1 we prove the inverse result. \end{rem}
     It is interesting to find the supremum of numbers $\beta_q < a_q$ such that the strip $\{z \in \C :\: \Re z > \beta_q\}$ contains an
     infinite number poles of $\eta_q$ and to obtain so called {\it essential spectral gap}. This is a difficult open problem. Let $b_q< a_q$ be the abscissa of convergence of the series
     \begin{equation} \label{eq:1.5}
     \sum_{\gamma, m(\gamma) \in q\N} \frac{\tau^{\sharp}(\gamma) e^{-s \tau(\gamma)}}{|\det(\mathrm{Id}-P_\gamma)|}, \: \Re s \gg 1
     \end{equation}
     and let $\alpha = \max\{0, a_1\}.$
      In our second result we obtain a more precise result for $\Res \eta_1$.
      \begin{theo} For any small $\ep > 0$ we have
\begin{equation} \label{eq:1.6}
\sharp\{\mu_j \in \Res \eta_1:\:\Re \mu_j > (2d^2 +2 d -1/2)(b_1 -2\alpha)- \ep\} = \infty.
\end{equation}
\end{theo}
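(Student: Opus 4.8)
The plan is to bootstrap from Theorem 1.1 together with a quantitative comparison between $\eta_1$ and the "squared" Dirichlet series defining $b_1$. First I would recall the mechanism behind Theorem 1.1: the number of poles of $\eta_q$ in a disc of radius $r$ is controlled from below by the counting function of periodic rays of length $\leqslant r$, via a Tauberian/contour argument applied to $\eta_q$; the abscissa governing the "noise" that must be subtracted is precisely $b_q$, coming from the $|\det(\mathrm{Id}-P_\gamma)|^{-1}$-weighted series \eqref{eq:1.5}, since squaring the terms is what appears when one estimates an $L^2$-type remainder. Thus the natural statement one gets is that $\Res\eta_1$ has infinitely many poles with $\Re\mu_j > \beta$ for any $\beta$ below some explicit combination of $a_1$ (or $\alpha = \max\{0,a_1\}$) and $b_1$. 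The content of Theorem 1.2 is to pin that combination down as $(2d^2+2d-1/2)(b_1 - 2\alpha)$.

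The key steps, in order, would be: (i) normalize the flow by the reparametrization/shift that replaces $\tau(\gamma)$ by $\tau(\gamma) - 2\alpha\,(\text{something proportional to }\tau)$ — i.e. absorb a factor $\e^{2\alpha\tau(\gamma)}$ — so that the relevant convergence abscissa becomes $b_1 - 2\alpha$; this is where the $-2\alpha$ shift in \eqref{eq:1.6} is born, and one must check it is compatible with the symbolic coding $\Sigma_A^f$ and with the bounds $c_1\e^{f_1\tau}\leqslant|\det(\mathrm{Id}-P_\gamma)|\leqslant C_1\e^{f_2\tau}$. (ii) Express $\eta_1$ (after the shift) through the transfer operator on $\Sigma_A^f$, and estimate the number of zeros/poles of the associated Fredholm determinant in a disc of radius $r$ by Jensen's inequality, bounding $\log$ of the determinant by the trace norm of the transfer operator's iterates. (iii) Control that trace norm: here the dimension enters. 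The relevant function space is a space of holomorphic functions on a complex neighbourhood of the (real) symbol space, whose real dimension is the dimension of the reduced billiard phase space, namely $2(d-1)$; tracking singular values of the transfer operator on such a space in $n$ variables yields a bound with exponent governed by $n$, and after composing with the period-normalization one lands on the factor $2d^2+2d-1/2$ rather than something linear in $d$. (iv) Combine: if the pole-count in radius $r$ to the right of $\beta$ were finite for all $r$ with polynomial growth, the determinant would be too small to account for the lower bound on periodic rays coming from \eqref{eq:1.2}, a contradiction — giving \eqref{eq:1.6} for any $\beta < (2d^2+2d-1/2)(b_1-2\alpha)$, hence with the $-\ep$.

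I expect the main obstacle to be step (iii): getting the precise constant $2d^2+2d-1/2$. The lower-order combinatorial factors in Theorem 1.1 were harmless, but here one needs a sharp accounting of (a) the dimension $2(d-1)$ of the symbolic phase space, (b) the Hölder/analytic regularity exponent entering the singular-value decay of the weighted transfer operator, and (c) the exact loss incurred when passing from the suspended flow on $\Sigma_A^f$ back to the zeta function via the shift by $2\alpha$ and the square coming from the Cauchy–Schwarz estimate that produces $b_1$ in the first place. Each of these contributes a polynomial-in-$d$ factor, and the claimed exponent is presumably the product/sum $2d^2 + 2d - 1/2$ of these; verifying that no better constant is forced, and that this one is actually attained, is the delicate part. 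A secondary technical point is ensuring $b_1 - 2\alpha$ can be taken strictly less than the abscissa governing the main term so that the strip in \eqref{eq:1.6} is genuinely to the left of $a_1$ and the statement is non-vacuous; this uses $b_1 < a_1$ and the definition $\alpha = \max\{0,a_1\}$, but one should check the sign of $(2d^2+2d-1/2)(b_1-2\alpha)$ relative to the abscissa of convergence so that \eqref{eq:1.6} is a statement about poles in a genuine half-plane containing $\Res\eta_1$.
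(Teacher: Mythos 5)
Your proposal misses the central tool of the paper's argument. The actual proof does not bootstrap from Theorem~1.1, does not pass through a Fredholm determinant, and does not use Jensen's inequality or singular-value estimates on spaces of holomorphic functions. Instead it is the argument of F.~Naud (Appendix~A of \cite{zworski2017local}) applied to the \emph{local trace formula} $(3.4)_1$ of Theorem~3.2: one pairs the two sides of that identity against the oscillating bump $\psi_{t,\xi}(s) = e^{i\xi s}\rho(s-t)$, obtains a pointwise bound of the form
$|S(t,\xi)| \leq C_1 e^{\alpha t}/(1+|\xi|) + C_A\, e^{-(A-\ep)t}(1+|\xi|)^{2d^2+2d-1+\ep}$
for the resulting lattice sum over periodic rays, and then forms the Gaussian average $G(t,\sigma)=\sigma^{1/2}\int |S(t,\xi)|^2 e^{-\sigma\xi^2/2}\,d\xi$. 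The squaring here is precisely what brings in the $b_1$-weighted series, via the diagonal terms $\tau(\gamma)=\tau(\gamma')$, and also what turns $e^{\alpha t}$ into $e^{2\alpha t}$ --- this is the true origin of the $-2\alpha$ shift, not a reparametrization of the flow. A separate Lemma~4.2 constructs an explicit sequence $t_j\to\infty$ on which the diagonal sum $\sum_{t_j-1/2\leq \tau(\gamma)\leq t_j+1/2}\tau^\sharp(\gamma)|\det(\id-P_\gamma)|^{-1}$ is bounded below by $e^{(b_1-2\ep)t_j}$; this lemma is essential and does not appear in your sketch. Finally one optimizes in $\sigma$ and lets $t_j\to\infty$ to contradict the finiteness assumption on the poles.

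The exponent $2d^2+2d-1/2$ has a different provenance than you suggest. It is not the dimension $2(d-1)$ of the reduced billiard phase space entering through singular-value decay of a transfer operator; it is $2\dim G + 1$, where $\dim G = d^2 + d - 1$ is the dimension of the $(d-1)$-Grassmannian bundle over the ambient manifold $N$, coming from the pole-counting bound (3.1) and the resulting estimate (3.3) on $\hat F_A$ in the local trace formula (both imported from \cite{jin2023resonances}); the extra $-1/2$ is absorbed from the $\sigma^{1/2}$ normalization of the Gaussian weight during the final optimization. Your step~(iii) would, if it could be carried out, most likely yield a different (and likely smaller) exponent because a symbolic-dynamics transfer operator acts on a lower-dimensional space than the Grassmannian lift; so even the heuristic for the constant does not match. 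You correctly anticipated that $b_1$ enters through an $L^2$-type / Cauchy--Schwarz mechanism and that a $2\alpha$ shift appears, but without the local trace formula, the Gaussian-weight device, and Lemma~4.2 the argument as written has no way to produce the stated bound, and as it stands it is not a proof of Theorem~1.2.
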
     
 Notice that \[2d^2 + 2d - 1/2 = 2(d^2 + d - 1) + 3/2 = 2 \dim G + 3/2,\] where $G$ is the $(d-1)$-Grassmannian bundle introduced in Section 2.  In Appendix, we prove that $b_1$ coincides with the abscissa of convergence of the series
   \begin{equation} \label{eq:1.7}
     \sum_{\gamma} \frac{\tau^{\sharp}(\gamma) e^{-s \tau(\gamma)}}{|\det(D_x \varphi_{\tau(\gamma)}\vert_{E_u(x)})|}, \: \Re s \gg 1,
     \end{equation} 
     where $E_u(x)$ is the unstable space of $x \in \gamma$ (see (\ref{eq:2.1}) for the notation).
  By using symbolic dynamics, we define (see (\ref{eq:A.3})) a function $G(\xi, y)< 0$ on the space $\Sigma_A^f$ related to dynamical characteristics of $D$ (see Appendix for definitions) and  prove the following
  \begin{prop} The abscissas of convergence $a_1$ and $b_1$ are given by
  \begin{equation}
  a_1 = \P(G),\: b_1 = \P(2 G),
  \end{equation}
  $\P(G)$ being the pressure of $G$ defined by $(\ref{eq:A.2})$.
  \end{prop}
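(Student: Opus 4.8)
The plan is to translate each Dirichlet series into a weighted sum over periodic orbits of the subshift of finite type coding the billiard flow, to identify its abscissa of convergence with the unique zero of a shift‑pressure function by Ruelle's thermodynamic formalism, and finally to rewrite that zero as a suspension pressure via Abramov's formula.

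\emph{Step 1: symbolic reduction.} First I would invoke the symbolic model of the flow used in Section~2 and the Appendix (following \cite{ikawa1990poles}, \cite{morita1991symbolic}): under (\ref{eq:1.1}) the billiard flow is coded by a topologically mixing subshift $(\Sigma_A,\sigma)$, every oriented periodic ray $\gamma$ with $m(\gamma)=n$ corresponding to a periodic point $\xi$ with $\sigma^n\xi=\xi$ (taken up to cyclic shift), the primitive period $\tau^\sharp(\gamma)$ to the primitive period of $\xi$, the period to $\tau(\gamma)=S_nf(\xi):=\sum_{k=0}^{n-1}f(\sigma^k\xi)$ for a strictly positive Hölder roof function $f$, and $\log|\det(D_x\varphi_{\tau(\gamma)}\vert_{E_u(x)})|=S_n\Lambda(\xi)$ for a positive Hölder function $\Lambda$; after the standard cohomological reduction $f$ and $\Lambda$ may be taken to depend only on future coordinates, and with $\Delta G(\xi):=\int_0^{f(\xi)}G(\xi,y)\,\dd y$ one has, by the very definition (\ref{eq:A.3}) of $G$, the identity $\Delta G=-\tfrac12\Lambda$. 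Next, using the two‑sided bound $c_1\le|\det(\mathrm{Id}-P_\gamma)|\big/|\det(D_x\varphi_{\tau(\gamma)}\vert_{E_u(x)})|\le C_1$ from the Appendix (a consequence of uniform hyperbolicity, the contracting block of $\mathrm{Id}-P_\gamma$ being bounded while $|\det(\mathrm{Id}-P_\gamma^u)|\asymp|\det P_\gamma^u|$), together with the fact that the weight $\tau^\sharp(\gamma)$ lies between a fixed constant and $\tau(\gamma)$, one sees that replacing $|\det(\mathrm{Id}-P_\gamma)|^{1/2}$ by $\e^{-\tfrac12 S_n\Lambda(\xi)}$ and dropping the polynomial weight does not move the abscissa of convergence. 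Hence $a_1$ is the abscissa of convergence of a Hölder‑weighted periodic‑orbit sum with weights $\e^{-sS_nf(\xi)+S_n\Delta G(\xi)}$, and similarly $b_1$ is that of the same sum with $\Delta G$ replaced by $2\Delta G$ (for $b_1$ one may also start directly from (\ref{eq:1.7}), already identified with $b_1$ in the Appendix).

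\emph{Step 2: pressure and Abramov.} For the weight $w_s=-sf+\Delta G$ (resp.\ $-sf+2\Delta G$) I would introduce the Ruelle transfer operator $L_{w_s}$ acting on Hölder functions on the one‑sided shift. By the Ruelle–Perron–Frobenius theorem its spectral radius is $\e^{P_\sigma(w_s)}$, where $P_\sigma$ is the topological pressure for $\sigma$, and by the Atiyah–Bott/Ruelle trace formula $\tr L_{w_s}^n=\sum_{\sigma^n\xi=\xi}\e^{S_nw_s(\xi)}$; hence the orbit sum converges when $P_\sigma(w_s)<0$ and diverges when $P_\sigma(w_s)>0$. Since $f\ge f_{\min}>0$, the map $s\mapsto P_\sigma(-sf+\Delta G)$ is continuous, strictly decreasing, and tends to $-\infty$, so it has a unique zero $s_0$, which is therefore the abscissa of convergence. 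Finally, Abramov's formula combined with the variational principle for suspension flows shows that $P_\sigma(-s_0f+\Delta G)=0$ is equivalent to $s_0=\P(G)$ with $\P$ the pressure on $\Sigma_A^f$ defined by (\ref{eq:A.2}) (see \cite[Ch.~6]{Parry1990}); thus $a_1=\P(G)$, and running the same argument with $2\Delta G=\Delta(2G)$ gives $b_1=\P(2G)$.

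\emph{Main obstacle.} The delicate part is not Step~2, which is the standard thermodynamic‑formalism computation, but the bookkeeping of Step~1: verifying that the coding matches \emph{oriented} periodic rays to shift orbits (including rays carrying a single orientation), that $f$ and $\Lambda$ are genuinely Hölder after the cohomological reduction, and above all that the passage from $|\det(\mathrm{Id}-P_\gamma)|^{1/2}$ to $\e^{-\tfrac12 S_n\Lambda}$ and the insertion of the subexponential weight $\tau^\sharp(\gamma)$ leave the abscissa of convergence unchanged — once the problem is put in the clean form ``abscissa of convergence of a Hölder‑weighted orbit sum'', the identification with a pressure zero and the Abramov rewriting are routine.
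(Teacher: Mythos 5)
Your argument is correct and follows essentially the same route as the paper: encode the billiard flow by a subshift of finite type following Ikawa, replace $|\det(\mathrm{Id}-P_\gamma)|$ by $|\det D_x\varphi_{\tau(\gamma)}\vert_{E_u(x)}|$ via the two--sided estimate (\ref{eq:A.1}), write $\log|\det D_x\varphi_{\tau(\gamma)}\vert_{E_u(x)}|$ as a Birkhoff sum of the H\"older potential $g=2\int_0^{f}G\,\dd y$, identify the abscissas with the unique zeros of $s\mapsto \P(-sf+\tfrac12 g)$ and $s\mapsto \P(-sf+g)$ through the Ruelle transfer operator, and pass to the suspension pressure by \cite[Prop.~6.1]{Parry1990}. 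The only difference is cosmetic: the paper works directly with the explicit billiard functions $f(\xi)=\|P_0(\xi)-P_1(\xi)\|$ and $g(\xi)=-\log\prod_j(1+f(\xi)\kappa_j(\xi))$ built from Ikawa's phase functions, whereas you phrase Step~1 in terms of an abstract H\"older cocycle $\Lambda$ with $\Delta G=-\tfrac12\Lambda$.
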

     For $a_1 \leq 0,$ we have $\alpha = 0,$ and Theorem 1.2 is similar to \cite[Theorem 3]{zworski2017local} established for weakly mixing Anosov flows $\psi_t$, where instead of $b_1 = \P(2 G)$ one has the pressure $\P(2 \psi^u) < 0$ of the Sinai-Ruelle-Bowen potential
  \[\psi^u(x) = -\frac{d}{dt} \Bigl( \log |\det D_x\psi_t \vert_{E_u(x)}|\Bigr)\vert_{t = 0}.\]  
  Notice that for Anosov flow one has $\P(\psi^u) = 0$, (see \cite[Theorem 5]{bowen1975ergodic}), while in our case $a_1 = \P(G)$ can be different from 0.  More precise results for the poles of the semi-classical zeta function  for contact Anosov flows have been obtained in \cite{faure2013band}, \cite[Theorem 1.2]{faure2017semiclassical}.
  
  \begin{rem}The constant $2d^2 + 2d -1/2$ in (\ref{eq:1.6}) is related to the estimate (\ref{eq:3.3}) of Fourier transform $\hat{F}_{A, 1}$ in the local trace formula for $\eta_1(s)$ in Theorem 3.2 and is probably  not optimal. A better estimate of $\hat{F}_{A,1}$ can be obtained if the bound of the number of poles (\ref{eq:3.1}) is improved (see for example, \cite{arnoldi2017}, where the Hausdorff dimension of the trapped set $K$ is involved).
  \end{rem}
 We have $b_1 = b_2$ since the series 
 \begin{equation} \label{eq:1.9}
 \sum_{\gamma} \frac{(-1)^{m(\gamma)} \tau^{\sharp}(\gamma)e^{- s \tau(\gamma)}}{|\det(\id - P_{\gamma})|},\: \Re s \gg 1
 \end{equation}
  is analytic for $\Re s \geq b_1$. We discuss this question at the end of the Appendix.  Theorem 1.2 can be generalized for $\Res \eta_2$ and one obtains (\ref{eq:1.6}). The proof works with some modifications.

 The paper is organised as follows. In Section 2 we collect some definitions and notations from \cite{chaubet2022} which are necessary for the exposition. In particular, we define the non-grazing billiard flow $\varphi_t$, the $(d-1)$-Grassmannian bundle $G$, the bundles $\mathcal E_{k, \ell}$ over $G,$ and the operators $\Pbf_{k, \ell},\: 0 \leq k \leq d,\: 0 \leq \ell \leq d^2.$ In  Section 3 we obtain local trace formulas combining the results in {\cite[\S6.1]{jin2023resonances} and \cite[Lemma 3.1]{chaubet2022}. In Section 4  we prove Theorems 1.1 and 1.2. Finally in the Appendix we use symbolic dynamics and establish Proposition 1.2.

 \section{Preliminaries}
We recall the definition of billiard flow $\phi_t$ described in \cite[\S 2.1]{chaubet2022}.  Denote by $S\R^d$ the unit tangent bundle of $\R^d$ and by $\pi : S\R^d \to \R^d$ the natural projection. For $x \in \partial D_j$, denote by $n_j(x)$ the {\it inward unit normal vector} to $\partial D_j$ at the point $x$ pointing into $ D_j.$ Set 
\[\mathcal D= \{(x,v) \in S \R^d~:~x \in \partial D\}.\]
We say that $(x,v) \in T_{\partial D_j}(\R^d)$ is incoming (resp. outgoing) if we have $\langle v, n_j(x)\rangle > 0$ (resp. $\langle v, n_j(x) \rangle < 0$). Introduce
\[
\begin{aligned}
\mathcal D_\mathrm{in} &= \{ (x, v) \in \mathcal D~:~(x,v) \text{ is } {\rm incoming}\}, \\
\mathcal D_\mathrm{out}&= \{ (x, v) \in \mathcal D~:~ (x,v) \text{ is } {\rm outgoing}\}.
\end{aligned}
\]
Define the grazing set $\mathcal D_\mathrm{g}= T(\partial D) \cap \mathcal D$ and obtain
\[\mathcal D = \mathcal D_\mathrm{g} \sqcup \mathcal D_\mathrm{in} \sqcup \mathcal D_\mathrm{out}.\]
 The billiard flow $(\phi_t)_{t \in \R} $ is the complete flow acting on $S\R^d \setminus \pi^{-1}(\mathring{D})$ which is defined as follows. For $(x,v) \in S\R^d\setminus \pi^{-1} (\mathring{D})$ set
\[
\tau_\pm(x,v) = \pm \inf\{t \geqslant 0: x \pm tv \in \partial D\}.
\]
For $(x, v) \in \mathcal D_\mathrm{in/out},$ denote by $v' \in \mathcal D_\mathrm{out/in}$  the image of $v$ by the reflexion with respect to $T_x(\partial D)$ at $x \in \partial D_j$,  given by
\[
v' = v - 2\langle v, n_j(x) \rangle n_j(x), \quad v \in S_x(\R^d), \quad x \in \partial D_j.
\]
Then,  for $(x,v) \in (S\R^d \setminus \pi^{-1}(D)) \cup \mathcal D_{\mathrm g},$ define
\[
\phi_t(x,v) = (x + tv, v), \quad t \in [\tau_-(x,v), \tau_+(x,v)],
\]
while for $(x,v) \in \mathcal D_{\mathrm{in/out}}$, we set
\[
\phi_t(x,v) = (x+tv, v) \quad \text{ if } \quad \left \lbrace \begin{matrix} (x, v) \in \mathcal D_\mathrm{out},~t \in \left[0, \tau_+(x,v)\right], \vspace{0.2cm} \\ \text{or } (x, v) \in \mathcal D_{\mathrm{in}},~t \in \left[\tau_{-}(x, v) , 0\right], \end{matrix} \right.
\]
and 
\[
\phi_t(x,v) = (x+tv', v') \quad \text{ if } \quad \left \lbrace \begin{matrix} (x, v) \in \mathcal D_\mathrm{in},~t \in \left]0, \tau_+(x,v)\right], \vspace{0.2cm} \\ \text{or } (x, v) \in \mathcal D_{\mathrm{out}},~t \in \left[\tau_{-}(x, v') , 0\right[. \end{matrix} \right.
\]
We extend $\phi_t$ to a complete flow still denoted by $\phi_t,$ having the property
\[
\phi_{t+s}(x,v) = \phi_t ( \phi_s(x,v)), \quad t,s \in \R, \quad (x,v) \in S\R^d \setminus \pi^{-1}(\mathring D).
\]

 Next, we introduce the non-grazing set $M$ as 
\[
M = B / \sim, \quad B = S\R^d \setminus \left(\pi^{-1}(\mathring{D}) \cup \mathcal D_\mathrm{g}\right),
\]
where $(x,v) \sim (y,w)$ if and only if $(x,v) = (y,w)$ or
\[
x = y \in \partial D \quad \text{ and } \quad w = v'.
\]
The set $M$ is endowed with the quotient topology. We change the notation and pass from $\phi_t$ to the non-grazing flow {$\varphi_t$}, which is defined on $M$ as follows. For $(x,v) \in (S\R^d \setminus \pi^{-1}(D)) \cup \mathcal D_\mathrm{in},$ define
\[
{\varphi_t([(x,v)])} = [\phi_t(x,v)], \quad t \in \left]\tau^{\mathrm{g}} _-(x,v), \tau^{\mathrm{g}} _+(x,v)\right[,
\]
where $[z]$ denotes the equivalence class of $z \in B$ for the relation $\sim$, and 
\[
\tau^{\mathrm{g}} _\pm(x,v) = \pm \inf\{t > 0:\phi_{\pm t}(x,v) \in \mathcal D_\mathrm{g}\}.
\]
Thus $\varphi_t$  is continuous, but the flow trajectory of  $(x, v)$  for times $t \notin \left]\tau^{\mathrm{g}} _-(x,v), \tau^{\mathrm{g}} _+(x,v)\right[$  is not defined. Clearly,  we may have  $\tau_{\pm}^{\mathrm{g}}(x, v)  = \pm\infty,$ while  $\tau^{\mathrm{g}}_{\pm}(x, v) \neq 0$ for $(x, v) \in {\mathcal D}_{\mathrm{in}}.$
Note that the above formula indeed defines a flow on $M$ because each $(x,v) \in B$ has a unique representative in $(S\R^d \setminus \pi^{-1}(\mathring{D})) \cup \mathcal D_\mathrm{in}$.Following \cite[Section 3]{delarue2022resonances}, we may define smooth charts on $M = B/\sim$ and $\varphi_t$ becomes $C^{\infty}$ non complete flow with respect to new charts.

Throughout, we work with the smooth flow $\varphi_t$ and denote by  $X$ its  the generator. Let $ A(z) = \{t \in \R~:~\pi(\varphi_t(z)) \in \partial D\}.$
 The {\it trapped set} $K$ of $\varphi_t$ is the set of points $z \in M$ which satisfy $-\tau_-^{\mathrm{g}}(z) = \tau_+^{\mathrm{g}} (z) = +\infty$ and
\[
\sup A(z) = - \inf A(z) = +\infty.
\]
By definition, $\varphi_t(z)$ is defined for all $t \in \R$ whenever $z \in K$. The flow $\varphi_t$ is called {\it uniformly hyperbolic} on $K$, if for each $z \in K$ there exists a decomposition
\begin{equation} \label{eq:2.1}
T_zM= \R X(z) \oplus E_u(z) \oplus E_s(z),
\end{equation}
which is $\dd \varphi_t$-invariant with $\dim E_s(z) = \dim E_u(z)  = d - 1$, {such that for some constants $C > 0,\: \nu > 0$, independent of $z \in K$}, and some smooth norm $\|\cdot\|$ on $TM$, we have
\begin{equation}\label{eq:2.6}
\left\|\dd \varphi_t(z) \cdot v\right\| \leqslant \left \lbrace \begin{matrix} C \e^{-\nu t} \|v\|,~ &v \in E_s(z),~&t\geqslant 0, \vspace{0.2cm}  \\  C \e^{-\nu |t|} \|v\|,~ &v \in E_u(z),~&t\leqslant 0. \end{matrix} \right.
\end{equation}
The spaces $E_s(z)$ and $E_u(z)$ depend continuously on $z$ (see \cite[Section 2]{hasselblat2002}).

The flow $\varphi_t$ is uniformly hyperbolic on $K$ (for the proof see \cite[Appendix A]{chaubet2022}). Take a small neighborhood $V$ of $K$ in $M$, with smooth boundary, and embed $V$ into a compact manifold without boundary $N$. We arbitrarily extend $X$ to obtain a smooth vector field on $N$, still denoted by $X$. The associated flow is still denoted by $\varphi_t$. Note that the new flow $\varphi_t$ is now complete. Introducing the surjective map
\[\pi_M: B \ni (x, \xi) \rightarrow [(x, \xi)] \in M,\] we have
$\varphi_t \circ \pi_M = \pi_M \circ \phi_t$ and there is a bijection between periodic orbits of $\phi_t$ and $\varphi_t$ preserving the periods of the closed trajectories of $\phi_t$, while the corresponding Poincaré maps are conjugated (see \cite[Section 3]{delarue2022resonances}).

 Consider the $(d-1)$-Grassmannian bundle 
\[\pi_G : G \to N\] over $N$. {More precisely}, for every $z \in N$, the set $\pi_G^{-1}(z)$ consists of all $(d-1)$-dimensional planes of $T_zN$.  The dimension of $\pi_G^{-1}(z) $ is $d(d-1)$
 and  $G$ is a smooth compact manifold with $\dim G = d^2 + d -1$ . We lift  $\varphi_t$ to a flow $\wt \varphi_t : G \to G$  defined by
\[
\wt \varphi_t(z,E) = (\varphi_t(z), \dd \varphi_t(z)(E)),\: z \in N,\: E \subset T_zN,\: \dd \varphi_t(z)(E) \subset T_{\varphi_t(z)}N.
\]
Introduce the set 
\[
\widetilde K_u  = \{(z,E_u(z))~:~z \in K\} \subset G.
\]
Clearly, $\wt K_u$ is invariant under the action of $\wt\varphi_t$, since $\dd\varphi_t(z) (E_u(z)) = E_u(\varphi_t(z))$. The set $\wt K_u$ will be seen as the trapped set of the restriction of $\widetilde \varphi_t$ to a neighborhood of $\wt K_u$ and the flow $\wt \varphi_t$ is uniformly hyperbolic on $\wt K$(see  \cite[Lemma A.3]{bowen1975ergodic}, \cite[\S2.5]{chaubet2022}). Let $\wt X$ be the generator of $\wt \varphi_t,$ and let  $\wt V_u$ be a small neighborhood of $\wt K_u$ in $G$ with smooth boundary $\partial \wt V_u$ (see\cite [\S2.7]{chaubet2022}). Define 
\[\Gamma_{\pm}(\wt X) = \{ z \in \wt V_u:\: \wt \varphi_t(z) \in \wt V_u,\: \mp t > 0\}.\]

 Denote by $\clos\: \wt V_u$ the closure of  $\wt V_u$. Let  $\tilde \rho \in C^\infty(\clos~\wt V_u, \bar{\R}_+)$ be the defining function for $\wt V_u$  such that  $\partial \wt V_u= \{z \in \clos~ \wt V_u: \tilde \rho(z) = 0\}$ and $\dd \tilde \rho(z) \neq 0$ for any $z \in \partial \wt V_u$. Following \cite[Lemma 2.3]{guillarmou2021boundary}, for any small neighborhood $\wt W_0$ of $\partial \wt V_u,$ there exists a vector field $\wt Y$ on $\clos \wt V_u$ arbitrary close to $\wt X$ in $C^{\infty}$-topology and flow $\wt \psi_t$ generated by $\wt Y$ with the properties:\\
\[ (1)\:\: {\rm supp}\:(\wt Y - \wt X) \subset \wt W_0. \]
(2)\:\: (Convexity condition)  For any defining  function $\rho$ of $\wt V_u$ and any $\omega \in \partial \wt V_u$ we have 
\[\wt Y \rho (\omega) = 0 \Longrightarrow \wt Y^2 \rho(\omega) < 0.\]
(3)\:\: $\Gamma_{\pm}(\wt X) = \Gamma_{\pm}(\wt Y),$ where $\Gamma_{\pm}(\wt Y)$ is defined as above by $\wt\psi_t.$\\

 By {\cite[Lemma 1.1]{dyatlov2016pollicott}}, we may find a smooth extension of $\wt Y$ on $G$ (still denoted by $\wt Y$) so that for every $\omega \in G$ and $T \geqslant 0$, we have
\begin{equation}\label{eq:globalconvex}
\omega, \wt\psi_T(\omega) \in \clos~ \wt V_u \quad \implies \quad \wt\psi_t(\omega) \in \clos~ \wt V_u ,\: \forall t \in [0, T].
\end{equation}
In the following, we fix $\wt V_u, \wt W_0, \tilde Y$ and the flow $\tilde \psi _t$ with the properties mentioned above. Thus, we obtain an {\it open hyperbolic system}
satisfying the conditions $(\bf A1) - (\bf A4)$ in \cite[\S0]{dyatlov2016pollicott} (see also \cite[\S2.1]{jin2023resonances}).

Next, repeating the setup in \cite[\S2.6]{chaubet2022}, we introduce some bundles passing to open hyperbolic system for bundles.
First, define the tautological vector bundle $\mathcal{E} \to G$ by
\[
\mathcal{E} = \{(\omega, u) \in \pi_{G}^*(TN)~:~\omega \in G,~u \in [\omega]\},
\]
where $[\omega] = E$ denotes the $(d-1)$ dimensional subspace of {$T_{\pi_G(\omega)}N$} represented by $\omega = (z,E)$ and $\pi_G^*(TN)$ is the pullback bundle of $TN.$
Second, introduce the ''vertical  bundle"  $\mathcal{F} \to G$ by
\[
\mathcal{F} = \{(\omega,W) \in TG~:~\dd \pi_G(\omega) \cdot W = 0 \},
\]
which is a subbundle of the bundle $TG \to G$. The dimensions of the fibres $\mathcal E_{\omega} $ and $\mathcal F_{\omega} $ of $\mathcal E$ and $\mathcal F$ over $\omega$ are given by
\[
\dim \mathcal E_{\omega} = d-1, \quad \dim \mathcal F_\omega = \dim \ker \dd \pi_G(\omega) = d^2 - d
\]
for any $\omega \in G$ with $\pi_G(\omega) = z$.
Finally, set 
\[
\mathcal{E}_{k, \ell} = \wedge^k \mathcal{E}^* \otimes \wedge^\ell \mathcal{F}, \quad 0 \leqslant k \leqslant d - 1, \quad 0 \leqslant \ell \leqslant d^2- d,
\]
where $\mathcal E^*$ is the dual bundle of $\mathcal E$, that is, we replace the fibre $\mathcal E_{\omega}$ by its dual space $\mathcal E_{\omega}^*$. 

Next, we use the notation $\omega = (z, \eta) \in G$ and $u \otimes v \in \mathcal E_{k, \ell}|_{\omega}$.
By using the flow $\tilde{\psi}_t$, introduce a flow
$\Phi^{k, \ell}_t : \mathcal E_{k, \ell} \to \mathcal E_{k, \ell}$
by 
\begin{equation}\label{eq:2.4}
\Phi_t^{k, \ell}(\omega, u \otimes v) = \Bigl(\tilde\psi_t(\omega), ~b_t(\omega) \cdot \left[ \left(\dd \varphi_{t}(\pi_G(\omega))^{-\top}\right)^{\wedge k}(u) \otimes \dd \wt \psi_{t}(\omega)^{\wedge \ell}(v)\right]\Bigr),
\end{equation}
with 
\[
b_t(\omega) = |\det \dd \varphi_t(\pi_G(\omega))|_{[\omega]}|^{1/2} \cdot |\det\left( \dd \tilde\psi_t(\omega)|_{\ker \dd \pi_G}\right)|^{-1},
\]
where $^{-\top}$ denotes the inverse transpose.
Consider the transfer operator
\[
\Phi^{k, \ell, *}_{-t} : C^\infty(G, \mathcal E_{k, \ell}) \to C^\infty(G, \mathcal E_{k, \ell})
\]
defined by
\begin{equation}\label{eq:2.5}
\Phi^{k, \ell, *}_{-t}\ubf(\omega) = \Phi^{k, \ell}_t \bigl[ {\bf u} (\wt \psi_{-t}(\omega)) \bigr], \quad {\bf u} \in C^\infty(G, \mathcal E_{k, \ell})
\end{equation}
and let $\mathbf{P}_{k, \ell} :  C^\infty(G, \mathcal E_{k, \ell}) \to C^\infty(G, \mathcal E_{k, \ell})$ be the generator of $\Phi^{k, \ell, *}_{-t}$ given by
\[
\Pbf_{k,\ell} {\bf u}  = \left.\frac{\dd}{\dd t} \Bigl(\Phi^{k, \ell,*}_{-t}{\bf u} \Bigr)\right|_{t=0}, \quad {\bf u} \in C^\infty(G, \mathcal E_{k, \ell}).
\]
The operator $\Pbf_{k, \ell}$ has the property 
\[\Pbf_{k,\ell} (f {\bf u}) = (\Pbf_{k, \ell} f) {\bf u} + f( \Pbf_{k, \ell} {\bf u}),\: f \in C^{\infty}(G),\: {\bf u} \in C^{\infty}(G, \mathcal E_{k, \ell}).\]

Thus, we obtain the same  setup as in Definition 6.1 in \cite[\S6.1]{jin2023resonances}. In the last paper the authors deal with a general Axiom A flow with several basic sets. In our case we have only one basic set and we may apply the results of \cite{dyatlov2016pollicott} and \cite{jin2023resonances}. With some constant $C > 0,$ we have
\[\|e^{- t \Pbf_{k,\ell}}\|_{L^2(G, \mathcal E_{k, \ell})
 \rightarrow L^2(G, \mathcal E_{k, \ell})} \leq C e^{C t},\: t \geq 0\]
 and
 \[(\Pbf_{k, \ell} + s)^{-1} = \int_0^{\infty} e^{-t(\Pbf_{k, \ell} + s)} dt : L^2(G, \mathcal E_{k, \ell})
 \rightarrow L^2(G, \mathcal E_{k, \ell}),\: \Re s \gg 1.\]
 Introduce the operator
\[
\Rbf_{k, \ell}(s) = {\bf 1}_{\wt V_u} (\Pbf_{k,\ell} + s)^{-1} {\bf 1}_{\wt V_u}: \: C^\infty_c(\wt V_u, \mathcal E_{k, \ell}) \rightarrow \mathcal{D}'(\wt V_u, \mathcal E_{k, \ell}),     \quad \Re(s) \gg 1,
\]
where ${\mathcal{D}}'(\wt V_u, \mathcal E_{k, \ell})$ denotes the space of $\mathcal E_{k, \ell}$-valued distributions. Applying \cite[Theorem 1]{dyatlov2016pollicott}, we obtain a meromorphic extension of  $\Rbf_{k, \ell}(s)$ to the whole plane $\C$ with simple poles and positive integer residues.

For $\omega \in G$ and $t > 0$ consider the {\it parallel transport} map
\[\alpha_{\omega, t}^{k, \ell} = \alpha_{1, \omega, t} \otimes \alpha_{2, \omega, t} :\: \Lambda^k \mathcal E_{\omega}^{*} \otimes \Lambda^{\ell}\mathcal F_ {\omega} \longrightarrow \Lambda^k \mathcal E_{{\tilde{\psi}}_t(\omega)} ^{*}\otimes \Lambda^{\ell}\mathcal F_{{\tilde{\psi}}_t(\omega)}\]
given by
\[ \ubf \otimes {\bf v}  \longmapsto (e^{- t \Pbf_{k, \ell}}(\ubf \otimes {\bf v} )) (\wt \psi (t)),\]
where $\ubf, {\bf v}$ are some sections of $\mathcal E_{\omega}^*$ and $\mathcal F_{\omega}$ over $\omega,$ respectively. The definition does not depend on the choice of $\ubf$ and ${\bf v}$ (see \cite[Eq. (0.8)]{dyatlov2016pollicott}). For a periodic trajectory $\wt\gamma: t \rightarrow \wt \gamma(t) = (\gamma(t), E_u(\gamma(t)))$ with period $T,$ we define 
\[\tr (\alpha^{k, \ell}_{\tilde{\gamma}})= \tr (\alpha_{\tilde{\gamma}(t), T}^{k, \ell})\] 
(see \cite{dyatlov2016pollicott}, \cite{chaubet2022}) and the trace is independent of the choice of the point $\wt \gamma(t) \in \wt \gamma.$

Finally, if $\wt \chi \in C_c^{\infty}(\wt V_u)$ is equal to 1 near the trapping set $\wt K_u,$
 we have the Guillemin trace formula (see \cite[(4.6)]{dyatlov2016pollicott},  \cite[\S3.1]{Weich2023},\cite [\S3.2]{chaubet2022}) with the flat trace
\begin{equation}\label{eq:2.6}
\tr^\flat (\wt\chi e^{-t \Pbf_{k, \ell}} \wt \chi)  = \sum_{\wt \gamma} \frac{\tau^\sharp(\gamma)\tr (\alpha^{k, \ell}_{\wt \gamma})\delta(t - \tau(\gamma))} {|\det(\id - \wt P_{\gamma})|}, \: t > 0.
\end{equation}
Here both sides are distributions on $(0, \infty)$ and the sum runs over all periodic orbits $\tilde{\gamma}$ of $\tilde{\varphi}_t$,
\[
\wt P_\gamma = \left.\dd \wt \varphi_{-\tau(\gamma)}(\omega_{\tilde{\gamma}})\right|_{\wt E_u(\omega_{\wt \gamma}) \oplus \wt E_s(\omega_{\tilde{\gamma}})}\]
is the linearized Poincar\'e map of the periodic orbit $\wt \gamma(t)$
of the flow $\wt \varphi_t$ and $\omega_{\tilde {\gamma}} \in \mathrm{Im}(\tilde \gamma)$ is any reference point taken in the image of $\tilde \gamma$. 

To treat the zeta function related only to periodic rays with number of  reflections $m(\gamma) \in q \N,\: q\geq 2,$ we consider the setup introduced in \cite[\S4.1]{chaubet2022} and we recall it below.  For $q \geqslant 2,$ define the \textit{$q$-reflection bundle} $\mathcal R_q \to M$ by
\begin{equation}\label{eq:2.7}
\mathcal R_q = \left. \left(\left[S\R^d \setminus \left(\pi^{-1}(\mathring{D}) \cup \mathcal D_\mathrm g \right)\right] \times \R^q\right) \right/ \approx,
\end{equation}
where the equivalence classes of the relation $\approx$ are defined as follows. For $(x,v) \in S\R^d \setminus \left(\pi^{-1}(\mathring{D}) \cup \mathcal D_\mathrm g \right)$ and $\xi \in \R^q$, we set
\[
[(x,v,\xi)] = \left\{(x,v, \xi) , (x, v', A(q) \cdot \xi)\right\} \quad \text{ if } (x,v) \in \mathcal D_\mathrm{in},\: (x, v') \in \mathcal D_\mathrm{out},
\]
where $A(q)$ is the $q \times q$ matrix with entries in $\{0, 1\}$ given by
\[A(q) = 
\begin{pmatrix} 
			    0 &   &  & 1\\
                              1 & 0 & & \\
                               & \ddots & \ddots & \\              
                               & & 1  & 0
                              \end{pmatrix}.
\]
{Clearly, the matrix $A(q)$ yields a shift permutation 
\[A(q) (\xi_1, \xi_2, ..., \xi_q) = (\xi_q, \xi_1,...,\xi_{q-1}).\]}
and
\begin{equation}\label{eq:2.8}
A(q)^q = \id, \quad \tr A(q)^j = 0, \quad j = 1, \dots, q-1.
\end{equation}
This indeed defines an equivalence relation since $(x,v') \in \mathcal D_\mathrm{out}$ whenever $(x,v) \in \mathcal D_\mathrm{in}$. Define a smooth structure of $\mathcal R_q$ as in \cite[\S4.1]{chaubet2022}
and introduce the bundle 
\[\mathcal E_{k, \ell}^q = \mathcal E_{k, \ell} \otimes \pi_G^*\mathcal R_q,\] where $\pi_G^*\mathcal R_q$ is the pullback of $\mathcal R_q$ by $\pi_G$ so $\pi_G^* \mathcal R_q \rightarrow G$ is a vector bundle over $G$.
Consider a small smooth neighborhood $V$ of $ K$.  We embed $V$ into a smooth compact manifold without boundary $N$, and we fix an extension of $\mathcal R_q$ to $N$. Consider any connection $\nabla^q$ on the extension of $\mathcal R_q$ which coincides with $\dd^q$ near $K$, and denote by 
\[
P_{q,t}(z) : \mathcal R_q(z) \to \mathcal R_q(\varphi_t(z))
\]
the parallel transport of $\nabla^q$ along the curve $\{\varphi_\tau(z)~:~0 \leqslant \tau \leqslant t\}$.
We have a smooth action of {$\varphi_t^q$ on $\mathcal R_q$ which is given by the horizontal lift of $\varphi_t$}
\[
\varphi_t^q(z, \xi) =(\varphi_t(z), P_{q,t}(z) \cdot \xi), \quad (z, \xi) \in \mathcal R_q.
\]

 We may lift the flow $\varphi_t$ to a flow $\Phi^{k, \ell, q}_t$ on $\mathcal E_{k, \ell}^q$ which is defined locally near $\wt K_u$ by
\[
\begin{aligned}
&\Phi^{k, \ell, q}_t(\omega, u \otimes v \otimes \xi) \\
 & \quad \quad = \Bigl(\wt \varphi_t(\omega), ~b_t(\omega) \cdot \left[ \left(\dd \varphi_{t}(\pi_G(\omega))^{-\top}\right)^{\wedge k}(u) \otimes (\dd \wt \varphi_t(\omega))^{\wedge \ell}(v) \otimes P_{q, t}(z) \cdot \xi \right]\Bigr)
 \end{aligned}
\]
for any $\omega = (z, E) \in G, \: u \otimes v \otimes \xi \in \mathcal E_{k, \ell}^q(\omega)$ and $t \in \R$.
Following \cite[\S4.1]{chaubet2022}, we deduce that for any periodic orbit $\gamma = (\varphi_\tau(z))_{\tau \in [0, \tau(\gamma)]}$, the trace 

\begin{equation}\label{eq:2.9}
{\tr (P_{q, \gamma})} = \tr (P_{q, \varphi(z)} )=
\left\{
\begin{matrix} q &\text{ if } & m(\gamma) = 0 \mod q, \vspace{0.2cm}\\ 
0 &\text{ if } & m(\gamma) \neq 0 \mod q
\end{matrix}
\right.
\end{equation}
 is independent of $z.$
Define the transfer operator
\[\Phi^{k, \ell, q, *}_{-t}: C^{\infty} (G, \mathcal E_{k, \ell}^q) \rightarrow C^{\infty} (G, \mathcal E_{k, \ell}^q)\]
by
\[\Phi^{k, \ell, q, *}_{-t}{\bf u} (\omega) = \Phi^{k, \ell, q}_{t}[{\bf u} (\tilde{\varphi}_{-t}(\omega)],\: {\bf u} \in C^{\infty}(G, \mathcal E_{k, \ell}^q)\]
and denote by $\Pbf_{k, \ell,q}$ be the generator of $\Phi^{k, \ell, q, *}.$ As above, we obtain the flat trace
\begin{equation}\label{eq:2.10}
\tr^\flat (\wt\chi e^{-t \Pbf_{k, \ell, q}} \wt \chi)  = q\sum_{\wt \gamma,  m(\pi_G(\wt \gamma)) \in q \N} \frac{\tau^\sharp(\gamma)\tr (\alpha^{k, \ell}_{\wt \gamma})\delta(t - \tau(\gamma))} {|\det(\id - \wt P_{\gamma})|}, \: t > 0.
\end{equation}

We close this section by the following 
\begin{lemm}[Lemma 3.1, \cite{chaubet2022}] \label{lem:alternatedsum}
 For any periodic orbit $\tilde{\gamma}$ of the flow $\tilde{\varphi}_t$ related to a periodic orbit $\gamma$,  we have
\[
\frac{1}{|\det(\id-\wt P_{\gamma})|}\sum_{k = 0}^{d-1}\sum_{\ell = 0}^{d^2 - d} (-1)^{k + \ell} \tr (\alpha_{\tilde{\gamma}}^{k, \ell}) = |\det(\id-P_\gamma)|^{-1/2}.
\]
\end{lemm}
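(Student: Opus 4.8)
\medskip
\noindent\emph{Sketch of the approach.} The plan is to reduce the equality to a linear-algebra computation at one point of the orbit, and then to use the contact structure of the billiard flow. Fix a periodic orbit $\gamma$ of $\varphi_t$ with period $T=\tau(\gamma)$, its lift $\tilde\gamma$, and a reference point $\omega=(z,E_u(z))\in\tilde\gamma$. Let $A=\dd\varphi_T(z)|_{E_u(z)}$ and $B=\dd\varphi_T(z)|_{E_s(z)}$ be the expanding, resp.\ contracting, parts of the linearized return map, so that $\dd\varphi_T(z)$ is the identity on $\R X(z)$ and $|\det(\id-P_\gamma)|=|\det(\id-A)|\,|\det(\id-B)|$. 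Since $\varphi_t$ is contact, $P_\gamma$ is symplectic, so $|\det A|\,|\det B|=1$ and $\mathrm{spec}(B)=\{\lambda^{-1}:\lambda\in\mathrm{spec}(A)\}$; in particular $|\det(\id-B)|=|\det(\id-A^{-1})|=|\det A|^{-1}|\det(\id-A)|$. Finally, as $\mathrm{supp}(\wt Y-\wt X)$ avoids $\wt K_u$, the flows $\wt\psi_t$ and $\wt\varphi_t$ agree along $\tilde\gamma$, so by $(\ref{eq:2.4})$ the endomorphism $\alpha^{k,\ell}_{\tilde\gamma}$ of $\wedge^{k}\mathcal E_\omega^{*}\otimes\wedge^{\ell}\mathcal F_\omega$ equals $b_T(\omega)\,(A^{-\top})^{\wedge k}\otimes C^{\wedge\ell}$, where $C=\dd\wt\varphi_T(\omega)|_{\mathcal F_\omega}$ and $b_T(\omega)=|\det A|^{1/2}\,|\det C|^{-1}$.

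The first step is algebraic. Applying $\sum_{j}(-1)^{j}\tr(\wedge^{j}M)=\det(\id-M)$ to the two tensor factors separately gives
\[
\sum_{k=0}^{d-1}\sum_{\ell=0}^{d^{2}-d}(-1)^{k+\ell}\tr(\alpha^{k,\ell}_{\tilde\gamma})=b_T(\omega)\,\det(\id-A^{-\top})\,\det(\id-C)=|\det A|^{1/2}\,|\det C|^{-1}\det(\id-A^{-1})\,\det(\id-C),
\]
using $\det(\id-A^{-\top})=\det(\id-A^{-1})$.

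The second step computes $|\det(\id-\wt P_\gamma)|$. The vertical bundle $\mathcal F$ is $\dd\wt\varphi_t$-invariant because $\pi_G\circ\wt\varphi_t=\varphi_t\circ\pi_G$, and under the canonical identification $\mathcal F_\omega\cong\mathrm{Hom}\big(E_u(z),\,T_zN/E_u(z)\big)$ the lifted flow acts by $\phi\mapsto\bar A\circ\phi\circ A^{-1}$, where $\bar A$ is the map induced by $\dd\varphi_T(z)$ on $T_zN/E_u(z)\cong\R X(z)\oplus E_s(z)$. Since $A^{-1}$ contracts and $\bar A$ is bounded (equivalently, by the graph transform argument nearby $(d-1)$-dimensional planes are attracted to $E_u$ under the forward flow), $\mathcal F_\omega$ is contracted by the forward lifted flow, so $\mathcal F_\omega\subset\wt E_s(\omega)$ and $\wt E_u(\omega)\cap\mathcal F_\omega=0$. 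Consequently $\dd\pi_G$ induces isomorphisms $\wt E_u(\omega)\cong E_u(z)$ and $\wt E_s(\omega)/\mathcal F_\omega\cong E_s(z)$, the Poincar\'e map $\wt P_\gamma=\dd\wt\varphi_{-T}(\omega)|_{\wt E_u(\omega)\oplus\wt E_s(\omega)}$ respects the flag $\mathcal F_\omega\subset\wt E_s(\omega)\subset\wt E_s(\omega)\oplus\wt E_u(\omega)$, and it acts on the successive quotients as $C^{-1}$, $B^{-1}$ and $A^{-1}$. Hence $\det(\id-\wt P_\gamma)=\det(\id-A^{-1})\,\det(\id-B^{-1})\,\det(\id-C^{-1})$.

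Combining the two steps, note that $\dim\mathcal F_\omega=d(d-1)$ is even, so $\det(\id-C^{-1})=(\det C)^{-1}\det(\id-C)$, and dividing makes the factors $|\det C|$ and $\det(\id-C)$ cancel; moreover $\det(\id-A^{-1})$ and $\det(\id-C)$ are positive reals, since $A$ is expanding and $B$ contracting, so each eigenvalue factor $1-\mu$ has $\Re(1-\mu)>0$ and these occur in complex conjugate pairs. What remains is
\[
\frac{1}{|\det(\id-\wt P_\gamma)|}\sum_{k,\ell}(-1)^{k+\ell}\tr(\alpha^{k,\ell}_{\tilde\gamma})=\frac{|\det A|^{1/2}}{|\det(\id-B^{-1})|}=\frac{|\det A|^{1/2}\,|\det B|}{|\det(\id-B)|},
\]
and inserting $|\det B|=|\det A|^{-1}$ and $|\det(\id-B)|=|\det A|^{-1}|\det(\id-A)|$ identifies the right side with $|\det(\id-A)|^{-1/2}|\det(\id-B)|^{-1/2}=|\det(\id-P_\gamma)|^{-1/2}$, as desired. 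The main obstacle is the second step: one must set up the geometry of the Grassmannian lift correctly, namely the $\mathrm{Hom}$-space description of the vertical fibre, the induced action of $\dd\wt\varphi_t$ on it, and the inclusion $\mathcal F\subset\wt E_s$, so that $\wt P_\gamma$ acquires the block structure displayed above. It should also be noted that the passage from $|\det(\id-\wt P_\gamma)|$ to the square root $|\det(\id-P_\gamma)|^{-1/2}$ uses the contact (symplectic) structure of the billiard flow in an essential way.
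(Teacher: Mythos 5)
Your proof is correct, and it follows what is essentially the standard and intended route (the lemma is quoted from Chaubet–Petkov, and the bundles $\mathcal E$, $\mathcal F$ and the flow $\Phi^{k,\ell}_t$ are set up precisely so that the exterior-power identity $\sum_j(-1)^j\tr\wedge^j M=\det(\id-M)$ factorizes the alternating sum, the invariant flag $\mathcal F_\omega\subset\wt E_s(\omega)\subset\wt E_s(\omega)\oplus\wt E_u(\omega)$ block-triangularizes $\wt P_\gamma$, and the symplecticity of $P_\gamma$ collapses the result to $|\det(\id-P_\gamma)|^{-1/2}$). All the delicate points — the identification $\mathcal F_\omega\cong\mathrm{Hom}(E_u(z),T_zN/E_u(z))$ with the induced action $\phi\mapsto\bar A\circ\phi\circ A^{-1}$, the inclusion $\mathcal F_\omega\subset\wt E_s(\omega)$, the even-dimensionality of $\mathcal F_\omega$, and the positivity of $\det(\id-A^{-1})$, $\det(\id-C)$ — are handled correctly.
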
 

\section{Local trace formula}

In this section we apply the results of \cite{dyatlov2016pollicott} and {\cite[\S6.1]{jin2023resonances} for vector bundles. For simplicity, we will use the notations $\mathcal E_{k, \ell} = \mathcal E_{k, \ell}^1,\: \Pbf_{k, \ell} = \Pbf_{k, \ell, 1}$, etc.  For $\wt \chi \in C^\infty_c(\wt V_u)$ such that $\wt \chi \equiv 1$ near $\wt K_u$, by \cite{dyatlov2016pollicott} and \cite[\S6.1]{jin2023resonances},
 we conclude that for any integer $q \in \N$ 
\[
\wt \chi  (- i \Pbf_{k, \ell, q}  + s)^{-1} \wt \chi
\]
has a meromorphic continuation to $\C$. 
 Denote by $\Res(-i \Pbf_{k, \ell, q})$ the set of the poles of this continuation. Then, for any constant $\beta > 0,$ it was proved in \cite[(6.3)]{jin2023resonances} that we have the upper bound
\begin{equation} \label{eq:3.1}
\sharp \Res(- i \Pbf_{k, \ell, q} ) \cap \{\lambda \in \C,\: |\Re \lambda - E| \leq 1,\: \Im \lambda \geq - \beta \} = {\mathcal O}(E^{d^2+ d - 1}).
\end{equation}
In particular, there exists $C > 0$ depending on $\beta$ such that
\[ \sharp \Res(- i \Pbf_{k, \ell, q} ) \cap \{\lambda \in \C,\: | \lambda| \leq E,\: \Im \lambda \geq - \beta \} \leq C E^{d^2 + d}+ C.\]
Notice that the power $d^2 + d - 1$ comes from $\dim G$. Next, for $\Res(- i \Pbf_{k, \ell, q}),$ we obtain as in \cite{jin2023resonances} the following  local trace formula.
\begin{theo}[Theorem 1.5 and (6.5), \cite{jin2023resonances}] For every $ A > 0$ and any $q \in \N$ there exists a distribution $F_A^{k, \ell, q} \in {\mathcal S}'(\R)$ supported in $[0, \infty)$ such that
\begin{subequations}
\begin{align}
\sum_{\mu \in \Res(- i \Pbf_{k, \ell, q}), \Im \mu > -A} e^{- i\mu t} + F_A^{k, \ell, q}(t) \nonumber \\
= q\sum_{\tilde {\gamma}, \: m(\gamma) \in q\N} \frac{\tau^\sharp(\gamma)\tr (\alpha^{k, \ell}_{\tilde{\gamma}})\delta(t - \tau(\gamma))} {|\det(\id - \wt P_{\gamma})|}, \: t > 0.\tag{${3.2}_{k,\ell, q}$}
\end{align}
\end{subequations}
Moreover for any $\epsilon > 0$ the Fourier-Laplace transform $\hat{F}_A^{k, \ell, q}(\lambda)$ of $F_A^{k, \ell, q}(t)$ is holomorphic for $\Im \lambda < A - \epsilon$ and we have the estimate
\begin{equation}\label{eq:3.3}
|\hat{F}_A^{k, \ell, q}(\lambda) | = {\mathcal O}_{A, \epsilon, k, \ell, q} (1 + |\lambda|)^{2d^2 + 2d - 1 + \epsilon},\: \Im \lambda < A - \epsilon.
\end{equation}
Here $\gamma = \pi_G(\wt \gamma).$
\end{theo}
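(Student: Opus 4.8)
The plan is to derive the identity $(3.2)_{k,\ell,q}$ together with the bound $(\ref{eq:3.3})$ by transcribing the proof of Theorem 1.5 and equation (6.5) of \cite{jin2023resonances} into the vector-bundle setting assembled in Section 2. There it has already been checked that $(G,\wt\psi_t,\mathcal E_{k,\ell}^q,\Pbf_{k,\ell,q})$ is an open hyperbolic system with bundle coefficients satisfying the conditions $(\mathbf{A1})-(\mathbf{A4})$ of \cite{dyatlov2016pollicott} and the hypotheses of Definition 6.1 in \cite[\S6.1]{jin2023resonances}, so the only genuinely new points to address are: that we have a single basic set $\wt K_u$ rather than several, which only simplifies the arguments of \cite{jin2023resonances}; the extra tensor factor $\pi_G^*\mathcal R_q$, equivalently the scalar weight $\tr(P_{q,\gamma})\in\{0,q\}$ from $(\ref{eq:2.9})$; and the value of the exponent in $(\ref{eq:3.3})$, which has to be traced through the estimate.

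First I would start from the Guillemin trace formula $(\ref{eq:2.10})$, which identifies the right-hand side of $(3.2)_{k,\ell,q}$ with the flat trace $\tr^\flat(\wt\chi e^{-t\Pbf_{k,\ell,q}}\wt\chi)$ for $t>0$; by propagation of singularities this is a distribution on $(0,\infty)$ whose singular support is the locally finite set of periods. On the resolvent side, for $s$ in the half-plane where it is first defined, $\wt\chi(-i\Pbf_{k,\ell,q}+s)^{-1}\wt\chi$ is, up to an elementary factor and a rotation of the spectral variable, the Laplace transform of $\wt\chi e^{-t\Pbf_{k,\ell,q}}\wt\chi$; by \cite[Theorem 1]{dyatlov2016pollicott} its flat trace extends meromorphically to $\C$, with poles corresponding to $\mu\in\Res(-i\Pbf_{k,\ell,q})$ and residues equal to the positive-integer multiplicities. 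Writing $\tr^\flat(\wt\chi e^{-t\Pbf_{k,\ell,q}}\wt\chi)$ as a contour integral of this meromorphic function and then deforming the contour past the poles with $\Im\mu>-A$, the residue theorem produces the sum $\sum_{\mu\in\Res(-i\Pbf_{k,\ell,q}),\ \Im\mu>-A}e^{-i\mu t}$, each term counted with its multiplicity, plus an integral over the shifted contour; the latter, viewed as a distribution on $\R$, is $F_A^{k,\ell,q}$, and it is tempered and supported in $[0,\infty)$ for the reasons detailed in \cite[\S6]{jin2023resonances}. This gives $(3.2)_{k,\ell,q}$ as an identity of distributions on $(0,\infty)$. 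The number $q$ and the bundle $\pi_G^*\mathcal R_q$ enter only through the weights $\tr(P_{q,\gamma})$ and are carried through this argument unchanged, since the parallel transport $P_{q,t}$ acts by permutation matrices and is therefore isometric.

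The substantive part of the statement is the growth bound $(\ref{eq:3.3})$. By construction $\hat F_A^{k,\ell,q}(\lambda)$ equals, up to an elementary factor, the continued flat trace $\tr^\flat(\wt\chi(-i\Pbf_{k,\ell,q}+s)^{-1}\wt\chi)$ with its principal parts at the poles with $\Im\mu>-A$ removed, hence is holomorphic for $\Im\lambda<A-\epsilon$. For the growth I would run the two-step estimate of \cite[\S6]{jin2023resonances}. First, feed the polynomial pole count $(\ref{eq:3.1})$, whose exponent is $\dim G = d^2+d-1$, into a maximum-principle argument that bounds the operator norm of the continued resolvent by $\mathcal{O}(|\lambda|^{d^2+d-1})$ on vertical lines staying a fixed distance from all poles. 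Second, pass from this operator bound to a bound on its flat trace: the number of singular values of $\wt\chi(-i\Pbf_{k,\ell,q}+s)^{-1}\wt\chi$ exceeding $1$ is again $\mathcal{O}(|\lambda|^{d^2+d-1})$, each of them controlled by the operator norm, so the flat trace is $\mathcal{O}(|\lambda|^{2(d^2+d-1)})$ on those lines; the remaining factor $|\lambda|^{1+\epsilon}$ is the loss incurred in the resonance-counting / Phragm\'en--Lindel\"of step that upgrades a bound on lines avoiding poles to a bound on the whole half-plane $\{\Im\lambda<A-\epsilon\}$. Altogether this produces the exponent $2d^2+2d-1+\epsilon$ of $(\ref{eq:3.3})$.

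The step I expect to be the main obstacle is the second one above: making the passage from the resolvent operator norm to the flat-trace bound uniform in $(k,\ell,q)$ and accurate with respect to the anisotropic Sobolev spaces on which $-i\Pbf_{k,\ell,q}+s$ is Fredholm of index zero. This is, however, precisely the computation carried out in \cite[\S6.1]{jin2023resonances} for a general Axiom A flow with bundle coefficients, and in the present single-basic-set situation it only becomes simpler; in accordance with the remark on the constant $2d^2+2d-1/2$ in the introduction, any sharpening of $(\ref{eq:3.3})$ would have to come from improving the pole count $(\ref{eq:3.1})$.
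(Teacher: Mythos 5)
Your proposal is correct and takes essentially the same approach as the paper, which does not give a proof but simply cites \cite[Theorem 1.5 and (6.5)]{jin2023resonances} and remarks that the argument of \cite[\S4, \S6]{jin2023resonances} carries over to the vector-bundle setting of Section~2 with minor modifications. Your sketch faithfully reconstructs that argument, and in particular you correctly trace the exponent $2d^2+2d-1+\epsilon = 2(d^2+d-1)+1+\epsilon$ back to the polynomial pole count $(\ref{eq:3.1})$ with exponent $\dim G = d^2+d-1$.
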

As  mentioned in  \cite[Section 6] {jin2023resonances}, the proof in \cite[Section 4] {jin2023resonances} with minor modifications works in the case of vector bundles.
Combining the above result with Lemma 2.1, we obtain the following theorem.
\begin{theo} For every $ A > 0$ and any $\epsilon > 0$ there exists a distribution $F_{A, q} \in {\mathcal S}'(\R)$ supported in $[0, \infty)$ with Fourier-Laplace transform $\hat{F}_{A, q}(\lambda)$ holomorphic for $\Im \lambda < A - \epsilon$ such that
\begin{subequations}\label{eq:3.4}
\begin{align}
\sum_{k = 0}^d \sum_{\ell = 0}^{d^2 - d} \sum_{\mu \in \Res(- i \Pbf_{k, \ell, q}), \Im \mu > -A} ( -1)^{k + \ell}e^{- i\mu t} + F_{A, q}(t)\nonumber \\
 = q\sum_{\gamma, \: m(\gamma) \in q\N} \frac{\tau^\sharp(\gamma)\delta(t - \tau(\gamma))} {|\det(\id -  P_{\gamma})|^{1/2}},\: t > 0,\tag{$(3.4)_q$}
 \end{align}
 \end{subequations}

where $\hat{F}_{A, q}(\lambda) = \sum_{k = 0}^d \sum_{\ell = 0}^{d^2 - d} (-1)^{k + \ell}\hat{F}_{A}^{k, \ell, q}(\lambda)$ satisfies the estimate $(\ref{eq:3.3})$.
\end{theo}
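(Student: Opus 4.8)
The plan is to derive $(3.4)_q$ simply by forming the signed combination $\sum_{k=0}^{d}\sum_{\ell=0}^{d^2-d}(-1)^{k+\ell}$ of the local trace formulas $(3.2)_{k,\ell,q}$ supplied by Theorem 3.1 and then contracting the right-hand side by means of Lemma 2.1. First I would observe that the tautological bundle $\mathcal{E}\to G$ has rank $d-1$, so $\wedge^{d}\mathcal{E}^{*}=0$ and hence $\mathcal{E}_{d,\ell}^{q}=0$; consequently every term with $k=d$ below is trivial — the operator $\Pbf_{d,\ell,q}$ acts on the zero bundle and has empty resonance set, and $\tr(\alpha^{d,\ell}_{\tilde\gamma})=0$ — so the range $0\le k\le d$ coincides with $0\le k\le d-1$ and Lemma 2.1 applies verbatim once these extra terms are discarded.

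Next I would invoke Theorem 3.1 for each admissible pair $(k,\ell)$: it provides distributions $F_{A}^{k,\ell,q}\in\mathcal{S}'(\R)$ supported in $[0,\infty)$ whose Fourier--Laplace transforms $\hat F_{A}^{k,\ell,q}$ are holomorphic in $\{\Im\lambda<A-\epsilon\}$ and obey the bound $(\ref{eq:3.3})$, together with the identities $(3.2)_{k,\ell,q}$ in $\mathcal{D}'((0,\infty))$. I then set $F_{A,q}:=\sum_{k=0}^{d}\sum_{\ell=0}^{d^2-d}(-1)^{k+\ell}F_{A}^{k,\ell,q}$. Being a finite linear combination of tempered distributions supported in $[0,\infty)$, $F_{A,q}$ again lies in $\mathcal{S}'(\R)$ and is supported in $[0,\infty)$; its Fourier--Laplace transform is the finite sum $\hat F_{A,q}=\sum_{k,\ell}(-1)^{k+\ell}\hat F_{A}^{k,\ell,q}$, which is holomorphic for $\Im\lambda<A-\epsilon$ and inherits $(\ref{eq:3.3})$ because the implied constants merely add. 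Summing the left-hand sides of $(3.2)_{k,\ell,q}$ against $(-1)^{k+\ell}$ reproduces, term for term, the left-hand side of $(3.4)_q$; no cancellation among the poles of the different $\Pbf_{k,\ell,q}$ is claimed, so this step is purely formal.

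For the right-hand side I would use that for every $T>0$ only finitely many periodic rays satisfy $\tau(\gamma)\le T$, so each periodic-orbit sum in $(3.2)_{k,\ell,q}$ is a locally finite sum of Dirac masses on $(0,\infty)$ and the finite combination over $(k,\ell)$ may be rearranged freely. Collecting terms, for a periodic orbit $\tilde\gamma$ of $\tilde\varphi_t$ with $m(\gamma)\in q\N$ the coefficient of $\tau^{\sharp}(\gamma)\,\delta(t-\tau(\gamma))$ equals
\[
\frac{q}{|\det(\id-\wt P_\gamma)|}\sum_{k=0}^{d}\sum_{\ell=0}^{d^2-d}(-1)^{k+\ell}\tr(\alpha^{k,\ell}_{\tilde\gamma})=\frac{q}{|\det(\id-P_\gamma)|^{1/2}}
\]
by Lemma 2.1 (the $k=d$ summands vanishing). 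Appealing to the period- and Poincar\'e-map-preserving bijection between periodic orbits of $\tilde\varphi_t$ and those of $\varphi_t$ recalled in Section 2, the right-hand side is exactly $q\sum_{\gamma,\,m(\gamma)\in q\N}\tau^{\sharp}(\gamma)|\det(\id-P_\gamma)|^{-1/2}\,\delta(t-\tau(\gamma))$, which gives $(3.4)_q$.

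The argument is mostly bookkeeping; the single substantive input is Lemma 2.1, which collapses the alternating trace of the parallel-transport maps $\alpha^{k,\ell}_{\tilde\gamma}$ into the scalar weight $|\det(\id-P_\gamma)|^{-1/2}$ and is precisely what makes the signed sum of the bundle-valued trace formulas reproduce the billiard zeta function $\eta_q$ rather than a more complicated spectral object. The only point needing care — and I do not regard it as a genuine obstacle — is that all manipulations are carried out in $\mathcal{D}'((0,\infty))$, where the periodic-orbit sums are locally finite, so exchanging the finite $(k,\ell)$-sum with the $\tilde\gamma$-sum is legitimate and the holomorphy and polynomial growth of $\hat F_{A,q}$ follow term by term from Theorem 3.1.
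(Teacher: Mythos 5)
Your proposal is correct and follows essentially the same route the paper takes, which is stated in a single sentence: ``Combining the above result with Lemma 2.1, we obtain the following theorem.'' You have simply written out the bookkeeping the paper leaves implicit — forming the alternating $(k,\ell)$-sum of $(3.2)_{k,\ell,q}$, setting $F_{A,q}=\sum(-1)^{k+\ell}F_A^{k,\ell,q}$, invoking local finiteness of the periodic-orbit sum to justify rearranging, and collapsing the right-hand side with Lemma 2.1. Your observation that $\wedge^d\mathcal{E}^*=0$ (so the $k=d$ terms vanish) is a helpful note that reconciles the range $0\le k\le d$ appearing in Theorem 3.2 with the range $0\le k\le d-1$ in the definition of $\mathcal{E}_{k,\ell}$ and in Lemma 2.1.
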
 
Choosing $q = 1$, we obtain a local trace formula for Neumann dynamical zeta function $\eta_N(s)$, introduced in Section 1. For the Dirichlet dynamical zeta function $\eta_D(s)$ given in Section 1, we use the representation (\ref{eq:1.3})
and applying $(\ref{eq:3.4})_q$ with $q = 1, 2$, we obtain  the local trace formula
\begin{eqnarray} \label{eq:3.5}
\sum_{k = 0}^d \sum_{\ell = 0}^{d^2 - d} \sum_{\mu \in \Res(- i \Pbf_{k, \ell, 2}), \Im \mu > -A} ( -1)^{k + \ell}e^{- i\mu t}\nonumber \\
-\sum_{k = 0}^d \sum_{\ell = 0}^{d^2 - d} \sum_{\mu \in \Res(- i \Pbf_{k, \ell, 1}), \Im \mu > -A} ( -1)^{k + \ell}e^{- i\mu t} + F_{A, 2}(t) - F_{A, 1}(t)\nonumber \\
=\sum_{\gamma} \frac{(-1)^{m(\gamma)}\tau^\sharp(\gamma) \delta(t - \tau(\gamma))} {|\det(\id -  P_{\gamma})|^{1/2}}, \: t > 0.
\end{eqnarray}

Some resonances  $\mu \in \Res (- i \Pbf_{k, \ell, q}), \: k + \ell\: {\rm odd}, q = 1, 2$ may cancel with some resonances  $\nu \in\Res (-i \Pbf_{k, \ell, q}),\: k + \ell \: {\rm even}, q = 1, 2,$ and a priori it is not clear if the meromorphic continuation of dynamical zeta functions $\eta_N(s)$ and $\eta_D(s)$ have
 infinite number poles. Notice that all poles are simple and the cancellations in $(\ref{eq:3.4})_q$  could appear for terms with coefficients $+$ and $- $ related to $k + \ell$ even and $k + \ell$ odd, respectively. On the other hand, in (\ref{eq:3.5}) we have more possibilities for cancellations of poles.\\

 \section{Strip with infinite number poles}

\begin{proof}[Proof of Theorem 1.1] We will prove Theorem 1.1 for $\eta_D$ since the argument for $\eta_q$ is completely similar and simpler. 
After cancelation, all poles $ \mu$ at the left hand side of (\ref{eq:3.5}) satisfy $\Im \mu \leq \alpha = \max\{0, a_1\}.$  To avoid confusion, in the following we denote by $\wt \mu$ the poles $\mu$ in (\ref{eq:3.5}) which are not cancelled. Assume that for some $0 < \delta < 1$ and $0 \leq k \leq q, \; 0 \leq \ell \leq q^2 - q, \:  q = 1, 2,$ we have estimates
\begin{eqnarray} \label{eq:4.1}
N_{A, k, \ell, q} (r) = \sharp \{\wt \mu \in  \Res (-i \Pbf_{k, \ell, q}):\: |\wt \mu| \leq r, \:- A < \Im \wt \mu \leq \alpha\}\nonumber \\ 
\leq P(A, k, \ell, q, \delta) r^{\delta}.
\end{eqnarray}
We follow the argument in \cite[Section 5]{zworski2017local} and \cite[Appendix B]{chaubet2022} with some modifications. 
Let $\rho \in C_0^{\infty}(\R, \R_+)$ be an even function with ${\rm supp}\: \rho \subset \left[-1, 1\right]$ such that
\[
\rho(t) > 1 \quad \text{if} \quad |t| \leqslant 1/2,
\]
and 
\[
\hat{\rho}(-\lambda) = \int \e^{i t \lambda} \rho(t) \dd t \geqslant 0, \quad \lambda \in \R.
\]
Let $(\ell_j)_{j \in \N}$ and $(m_j)_{j \in \N}$ be sequences of positive numbers such that $\ell_j \geqslant d_0 = \min_{k \neq m} {\rm dist}\: (D_k, D_m)> 0, \: m_j \geqslant \max\{1, \frac{1}{d_0}\}$ and let $\ell_j \to \infty,\: m_j\to \infty$ as $j \to \infty$. Set
\[
\rho_j(t) = \rho(m_j (t- \ell_j)), \quad t \in \R,
\]
and introduce the distribution $\mathcal F_\mathrm D \in  {\mathcal S}'(\R^+)$ by
\begin{equation} \label{eq:4.1}
\mathcal F_\mathrm D(t) = \sum_{\gamma \in \mathcal P} \frac{(-1)^{m(\gamma)} \tau^{\sharp}(\gamma)  \delta(t - \tau(\gamma))}{|\det(I - P_{\gamma})|^{1/2}}.
\end{equation}
The following proposition has been established by Ikawa.

\begin{prop}[Prop. 2.3, \cite{ikawa1990poles}]\label{prop:2.3ika} Suppose that the function $s \mapsto \eta_\mathrm D(s)$ cannot be prolonged as an entire function of $s$. Then there exists $\alpha_0 > 0$ such that for any $\beta > \alpha_0$ we can find sequences  $(\ell_j), (m_j)$ with $\ell_j \to \infty$ as $j \to \infty$  such that for all $j \geqslant 0$ one has
\[
e^{\beta \ell_j} \leqslant m_j \leqslant \e^{2 \beta \ell_j}
\quad \text{and} \quad
|\langle \mathcal F_\mathrm D, \rho_j \rangle | \geqslant \e^{- \alpha_0 \ell_j}.
\]
\end{prop}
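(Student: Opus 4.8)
The plan is to recover the pairings $\langle\mathcal F_{\mathrm D},\rho_j\rangle$ from the meromorphic function $\eta_{\mathrm D}$, using that $\eta_{\mathrm D}(s)=\langle\mathcal F_{\mathrm D},\e^{-s\,\cdot}\rangle$ for $\Re s\gg1$ exhibits $\eta_{\mathrm D}$ as the Laplace transform of $\mathcal F_{\mathrm D}$. Since $\rho_j(t)=\rho(m_j(t-\ell_j))\in C^\infty_0(\R)$, a change of variables gives $\int_0^\infty\e^{st}\rho_j(t)\,\dd t=m_j^{-1}\e^{s\ell_j}\Psi(-s/m_j)$, where $\Psi(z)=\int_\R\rho(u)\e^{-zu}\,\dd u$ is entire, positive on $\R$ with $\Psi(0)=\int\rho>0$, and satisfies the Paley--Wiener bounds $|\Psi(x+iy)|\leqslant C_N\e^{|x|}(1+|y|)^{-N}$ for every $N$. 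Inverting the Laplace transform on a line $\Re s=\sigma$ with $\sigma>a_1$ yields, for every $j$,
\[
\langle\mathcal F_{\mathrm D},\rho_j\rangle=\frac{1}{2\pi i}\int_{\Re s=\sigma}\eta_{\mathrm D}(s)\,\frac{\e^{s\ell_j}}{m_j}\,\Psi(-s/m_j)\,\dd s .
\]
I would then shift the contour to the left. Since $\eta_{\mathrm D}$ is meromorphic on $\C$ with simple poles by $(\ref{eq:1.3})$ and \cite{chaubet2022}, has at most polynomial growth on vertical lines away from its poles (a consequence of Theorem 3.2 together with $(\ref{eq:3.3})$), and has polynomially many poles in horizontal strips by $(\ref{eq:3.1})$ -- which, combined with the decay of $\Psi$ in the imaginary direction, makes the residue series absolutely convergent -- shifting to a line $\Re s=\sigma_1$ far to the left gives, for a fixed exponent $M=M(d)$,
\[
\langle\mathcal F_{\mathrm D},\rho_j\rangle=\sum_{s_k:\,\Re s_k>\sigma_1}r_k\,\frac{\e^{s_k\ell_j}}{m_j}\,\Psi(-s_k/m_j)+\mathcal O\!\big(\e^{\sigma_1\ell_j}m_j^{M}\big).
\]

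Next I would argue by contradiction. Suppose no $\alpha_0>0$ as claimed exists; then for a suitably large fixed $\alpha_0$ (with $\alpha_0>f_2/2$) there are $\beta>\alpha_0$ and $L$ with $|\langle\mathcal F_{\mathrm D},\rho_{\ell,m}\rangle|<\e^{-\alpha_0\ell}$ for all $\ell\geqslant L$ and all $m\in[\e^{\beta\ell},\e^{2\beta\ell}]$, where $\rho_{\ell,m}(t)=\rho(m(t-\ell))$. Because $\eta_{\mathrm D}$ is \emph{not} entire it has a pole, so $\sigma_0:=\sup\{\Re s_k\}\in(-\infty,a_1)$. Pushing the contour in the second display a fixed distance $(M+2)\beta$ past $\sigma_0$ and taking $m_j$ near the lower endpoint $\e^{\beta\ell_j}$, the remainder becomes $\mathcal O(\e^{(\sigma_0-2\beta)\ell_j})$, which is dominated by the contribution of the poles $s_k$ with $\Re s_k$ within $\beta$ of $\sigma_0$, of size $\asymp\e^{\sigma_0\ell_j}/m_j\asymp\e^{(\sigma_0-\beta)\ell_j}$ -- \emph{provided} these residue terms do not mutually cancel. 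To obtain a definite lower bound one chooses $\ell_j\to\infty$ by Dirichlet's box principle so that, for the finitely many such poles in a fixed horizontal strip, the phases $\e^{i\,\Im(s_k)\ell_j}$ are simultaneously near a common unit vector, the remaining (high-frequency) poles being absorbed into the error by the decay of $\Psi$. This gives $|\langle\mathcal F_{\mathrm D},\rho_j\rangle|\geqslant c\,\e^{(\sigma_0-\beta)\ell_j}$, contradicting the assumed bound when $\beta-\sigma_0<\alpha_0<\beta$; the regime of $\beta$ where this range is empty (and the case $\sigma_0\leqslant0$) is handled instead by selecting, with the help of the counting $(\ref{eq:1.2})$ and the estimate $|\det(\id-P_\gamma)|\leqslant C_1\e^{f_2\tau(\gamma)}$, a single periodic ray $\gamma_j$ whose length is separated from all others by more than $\e^{-\beta\tau(\gamma_j)}$, so that $\rho_j$ sees only $\gamma_j$ and $|\langle\mathcal F_{\mathrm D},\rho_j\rangle|\asymp\tau^\sharp(\gamma_j)|\det(\id-P_{\gamma_j})|^{-1/2}\geqslant\e^{-\alpha_0\tau(\gamma_j)}$ since $\alpha_0>f_2/2$. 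In every case one produces the forbidden sequence, so the sequences of the statement must exist with $\e^{\beta\ell_j}\leqslant m_j\leqslant\e^{2\beta\ell_j}$ and $|\langle\mathcal F_{\mathrm D},\rho_j\rangle|\geqslant\e^{-\alpha_0\ell_j}$.

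The step I expect to be the main obstacle is the non-cancellation of the residue contributions: the residues $r_k$ of $\eta_{\mathrm D}=\eta_2-\eta_1$ are integers of no fixed sign, and a priori the many poles of $\eta_{\mathrm D}$ accumulating near the line $\Re s=\sigma_0$ could make the main term much smaller than $\e^{(\sigma_0-\beta)\ell_j}$. This is exactly where the freedom built into the statement is spent: the sequence $(\ell_j)$ is tuned by the equidistribution/box-principle argument so that the dominant phases align, and the two-sided band $\e^{\beta\ell}\leqslant m\leqslant\e^{2\beta\ell}$ is needed to keep the $m^{-1}$-weighted main term from being drowned by the sub-dominant poles (lower bound on $m$) while keeping the contour-shift remainder $\mathcal O(\e^{\sigma_1\ell}m^{M})$ subdominant after the fixed left shift (upper bound on $m$). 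The polynomial growth of $\eta_{\mathrm D}$ on verticals and the pole count $(\ref{eq:3.1})$ serve only to legitimate the contour shift and the convergence of the residue series.
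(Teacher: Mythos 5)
The paper does not prove Proposition~4.1 at all: it is quoted verbatim, with attribution, from Ikawa's 1990 paper \cite{ikawa1990poles}, and is used in Section~4 as a black box. So there is no ``paper's own proof'' to compare against; what I can do is assess your sketch on its own merits.

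Your route---representing $\langle\mathcal F_{\mathrm D},\rho_j\rangle$ as a contour integral of $\eta_{\mathrm D}(s)\,m_j^{-1}\e^{s\ell_j}\Psi(-s/m_j)$ over a vertical line, then shifting left and isolating the residue at a topmost pole---is a reasonable plan, and the role you assign to the two-sided constraint $\e^{\beta\ell_j}\leqslant m_j\leqslant\e^{2\beta\ell_j}$ (lower end so the $m_j^{-1}$ prefactor does not overwhelm the dominant residue, upper end so the shifted-contour remainder stays subdominant) is the right intuition. However, the step you yourself flag as the main obstacle is in fact a genuine gap, and I don't think it is closed by the device you propose. The issue is not merely that the integer residues $r_k$ may have mixed signs: it is that as $m_j=\e^{\beta\ell_j}\to\infty$ the factor $\Psi(-s_k/m_j)$ fails to localize to a \emph{fixed} finite set of poles. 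The Paley--Wiener decay of $\Psi$ only suppresses poles with $|\Im s_k|\gg m_j$, so the number of poles entering the ``main term'' grows like a power of $m_j$ (by the counting bound (\ref{eq:3.1})). A Dirichlet box-principle/almost-periodicity argument that simultaneously aligns $N_j$ phases $\e^{i\Im(s_k)\ell_j}$ requires $\ell_j$ to be chosen in a set whose density shrinks as the number of constraints $N_j$ grows, and here $N_j$ grows with $\ell_j$; there is no a priori reason the two requirements are compatible, and making them so would require a quantitative almost-periodicity estimate (with explicit dependence on $N_j$) that your sketch does not supply. In addition, if the topmost poles happen to have $\sum r_k=0$ one cannot aim the phases at $1$, and a Parseval/Bohr lower bound $\sup_\ell|\sum r_k\e^{i\tau_k\ell}|\gtrsim1$ again degrades as the number of summands grows, so this does not rescue the argument without further input. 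A secondary, smaller issue is uniformity: the depth of your contour shift is $(M+2)\beta$, which grows with $\beta$, and the polynomial bound on $\eta_{\mathrm D}$ off its poles (via Theorem~3.2 and (\ref{eq:3.3})) has constants depending on the depth $A$; one must check that this dependence does not spoil the remainder estimate as $\beta$ ranges over $(\alpha_0,\infty)$. Finally, the ``back-up'' branch in which a single periodic ray is isolated needs (\ref{eq:1.2}) together with a spacing estimate for lengths; such spacing estimates are far from automatic and are not established in the paper. In short: the architecture of your argument is sensible and Ikawa-flavoured, but the non-cancellation step is a real, unresolved obstruction, not a technical remark, and the proof as written would not go through.
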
 
We apply the local trace formula (\ref{eq:3.5}) to function $\rho_j(t).$ For $- A \leq \Im \zeta \leq \alpha$ we have
\[|\hat{\rho}_j(\zeta) | = m_j^{-1}  |\hat{\rho}(m_j^{-1}  \zeta) e^{- i \ell_j \zeta} | \leq C_N m_j^{-1} e^{\alpha\ell_j+m_j^{-1}\max (\alpha, A)}( 1 + |m_j^{-1} \zeta|)^{-N}.\]
Then, for $q = 1, 2$ and $- A \leq \Im \wt \mu \leq \alpha,$  one obtains
\[\big |\sum_{\Im \wt \mu > -A, \:\wt \mu \in \Res(- i \Pbf_{k, \ell, q})} \langle e^{- i \wt \mu t}, \rho_j(t)\rangle\big | \]
\[\leqslant C_{N, A} m_j^{-1} e^{ \alpha \ell_j}\int_0^{\infty} (1 + m_j^{-1}  r)^{-N} d N_{A, k, \ell, q}(r) \]
\[=- C_{N, A} m_j^{-1} e^{\alpha \ell_j} \int_{0}^{\infty} \frac{d}{dr} \Bigl( (1 + m_j^{-1}  r)^{-N}\Bigr) N_{A, k, l, q}(r)dr \]
\[\leqslant B_{N, A} P(A, k,\ell, q, \delta) m_j^{-(1 -\delta)}e^{\alpha\ell_j }\int_0^{\infty} ( 1 + y)^{-N- 1}y^{\delta} dy \]
\[= A_N P(A, k, \ell, q, \delta) m_j^{-(1- \delta)}e^{\alpha \ell_j}\leq D_{A, k, \ell, q, \delta} e^{(-\beta(1-\delta) + \alpha)\ell_j}.\]

Next, applying (\ref{eq:3.3}), we have
\[\langle F_{A, q}, \rho_j \rangle = \int_{\R} \hat{F}_{A, q}(-\zeta) \hat{\rho}_j(\zeta) d \zeta = \int_{\R + i(\epsilon - A)} \hat{F}_{A, q}(-\zeta) \hat{\rho}_j(\zeta) d\zeta\]
and choosing $M = 2d^2 + 2d + 1$, we deduce
\[|\langle F_{A, q}, \rho_j \rangle | \leq C_{M, A,  q} m_j^{-1} e^{ (\epsilon - A)\ell_j} e^{m_j^{-1} \max\{A - \ep, \alpha\}} \]
\[ \times \int (1 + |\zeta|)^{2d^2 + 2d - 1 +\epsilon} ( 1 + |m_j^{-1} \zeta|)^{-M} d\zeta\]
\[\leq D_{M, A, q} e^{ (\epsilon - A) \ell_j}m_j^{2d^2 + 2d -1 + \epsilon}\leq D_{M, A, q}  e^{ (\epsilon - A)\ell_j }e^{2(2d^2 + 2d -1 + \epsilon)\beta \ell_j}.\]
If the function $\eta_D$ cannot be prolonged as entire function, we may apply Proposition 4.1. Taking together the above estimates and summing for $0 \leq k \leq d,\: 0 \leq \ell \leq d^2- d$ and $q = 1, 2$, we get
\[D_{A} e^{(-\beta(1-\delta) + \alpha)\ell_j} + E_A e^{(\epsilon - A)\ell_j }e^{2(2d^2 + 2d -1 + \epsilon)\beta \ell_j} \geq e^{-\alpha_0 \ell_j}.\]
Here, the constants $D_A$ and $E_A$ depend on $A$ but they are independent of $\ell_j.$  Choose $\beta > \frac{\alpha_0 + \alpha}{1 - \delta}$ and fix $\beta$ and $0 < \epsilon < 1$. Next choose
\[A > 2(2d^2 + 2d -1 + \epsilon)\beta +  \epsilon + \alpha_0.\]
Fixing $A$, for $\ell_j \to \infty$ we obtain a contradiction. This completes the proof of Theorem 1.1 for $\eta_D.$\\

To deal with $\Res \eta_q, \: q \geq 2$, we choose a periodic ray $\gamma_0$ with $q$ reflections, $\ell_j = j \tau^{\sharp}(\gamma_0), \: m_j = e^{ \beta \ell_j}$  The existence of a periodic ray with $q$ reflexions follows from the fact that for every configuration $\xi \in \Sigma_A$  there exists unique ray $\gamma(\xi)$ with successive reflexion points on $....\partial D_{j-1}, \partial D_j, \partial D_{j + 1},...$ (see the Appendix for the definition of $\Sigma_A$ and \cite{ikawa1988fourier}). We apply the lower bound
\[\big |\langle \sum_{\gamma, \: m(\gamma) \in q\N} \frac{\tau^\sharp(\gamma)\delta(t - \tau(\gamma))} {|\det(\id -  P_{\gamma})|^{1/2}}, \rho_j\rangle \big | \geq c e^{-c_0 \ell_j}, \: \forall j \geq 0\]
with $c > 0, c_0 > 0$ independent of $\ell_j.$ For $ q = 1,$ we choose $\ell_j = j \tau^{\sharp}(\gamma),\: m_j = e^{\beta \ell_j}$ with some periodic ray $\gamma$ and obtain the above estimate. Repeating the argument for $\eta_D$, we prove (\ref{eq:1.4}). \end{proof}

\begin{proof}[Proof of Theorem $1.2$] We follow the approach of F. Naud in \cite[Appendix A]{zworski2017local}. Let $0 \leq \rho \in C_0^{\infty}(-1, 1)$ be the function introduced above. For $\xi \in \R$ and $t > \max\{d_0, 1\}$ introduce the function 
\[\psi_{t, \xi}(s) = e^{ i\xi s} \rho(s- t), \: \xi \in \R.\]
We apply the trace formula $(\ref{eq:3.4})_1$  to $\psi_{t, \xi}$. As above, denote by $\wt\mu$ the poles which are not cancelled in the left hand side of $(\ref{eq:3.4})_1$. Assume that for $0 \leq k \leq d$ and $0 \leq \ell \leq d^2 - d,$ we have
\begin{equation}
\sharp \{ \wt \mu \in \Res(-i \Pbf_{k, l}): \: - A- \ep \leq \Im \wt \mu \leq \alpha\} = P(A, k, \ell, \ep) < \infty.
\end{equation} 
 First, we have
 \[|\hat{\psi}_{t, \xi}(\zeta) | \leq C_N e^{t \Im \zeta + |\Im \zeta|} (1 + |\Re \zeta - \xi|)^{-N}.\]
For $- A \leq \Im \wt\mu \leq \alpha$ and $N = 1$ the sum of terms involving poles $\wt\mu$ in $(\ref{eq:3.4})_1$ can be bounded by $\frac{C_1e^{\alpha t}}{1+ |\xi |}$ with constant $C_1 > 0$ depending on $P(A, k, \ell, \ep)$ and $\exp(\max\{A, \alpha\})$. Second, by using (\ref{eq:3.3}) for $\hat{F}_{A, 1}$, one deduces
\[|\langle F_{A,1}, \psi_{t, \xi}\rangle| \leq C_2 e^{(\ep - A)( t- 1)}(1 + |\xi|)^{2d^2 + 2d - 1 + \ep}.\]
Setting
\[S(t, \xi) = \sum_{\gamma} \frac{e^{i \xi \tau(\gamma)} \tau^{\sharp}(\gamma)\rho(\tau(\gamma) - t)}{|\det(\id -  P_{\gamma})|^{1/2}},\]
we get
\[|S(t, \xi)| \leq \frac{C_1 e^{\alpha t}}{1 + |\xi|} + C_A e^{-(A - \ep)t}(1 + |\xi|)^{2d^2 + 2d - 1 + \ep}.\]
Now consider the Gaussian weight
\[G(t, \sigma) = \sigma^{1/2} \int_{\R} |S(t, \xi)|^2 e^{-\sigma\xi^2/2} d\xi,\: 0 < \sigma < 1.\]
The estimate for $|S(t, \xi)|$ yields
\[|S(t, \xi)|^2 \leq \frac{2 C_1^2e^{2\alpha t}}{(1 + |\xi|)^2} + 2 C_A^2e^{-2(A -\ep)t}(1 + |\xi|)^{2(2d^2 + 2d - 1 + \ep)}\]
and
\begin{equation}\label{eq:4.4}
G(t, \sigma) \leq C_1'\sigma^{1/2}e^{2\alpha t} + C_A'\sigma^{-(2 d^2 + 2d - 1 + \ep)} e^{-2(A - \ep)t}.
\end{equation}
On the other hand, taking into account only the terms with $\tau(\gamma) = \tau(\gamma')$, we get
\begin{equation}
\begin{aligned} \label{eq:4.5}
G(t, \sigma) = \sqrt{2 \pi} \sum_{\gamma} \sum_{\gamma'} \frac{\tau^{\sharp}(\gamma) \tau^{\sharp}(\gamma')e^{-(\tau(\gamma) - \tau(\gamma'))^2/2\sigma} \rho(\tau(\gamma) - t) \rho(\tau(\gamma')- t) }{ |\det(\id -  P_{\gamma})|^{1/2}|\det(\id -  P_{\gamma'})|^{1/2}} \\
\geq c \sum_{t-1/2 \leq \tau(\gamma) \leq t +1/2} \tau^{\sharp}(\gamma)|\det(\id -  P_{\gamma})|^{-1}
\end{aligned}
\end{equation}
with $ c > 0$ independent of $t$ and $\sigma.$

Set $\tau(\gamma) = T_{\gamma},\: \tau^{\sharp}(\gamma) = T^{\sharp}_\gamma,\:a_{\gamma} = \frac{T^{\sharp}_\gamma}{|\det(\id -  P_{\gamma})|}.$
Recall that $b_1$ is the abscissa of convergence of Dirichlet series (\ref{eq:1.5}) with $q = 1.$\\
\def\32{\frac{3}{2}}

We need the following 
\begin{lemm} Let $0 < \ep$ be sufficiently small and $b_1 \neq 0.$ Then there exists a sequence $t_j \to \infty$ such that
\begin{equation} \label{eq:4.6}
\sum_{t_j - 1/2\leq \tg  \leq t_j + 1/2} \ag  \geq e^{(b_1- 2\ep)t_j}.
\end{equation}
On the other hand, for $b_1 = 0$ and small $u > 0$ there exists a sequence $t_j \to \infty$ such that
we have the estimate $(\ref{eq:4.6})$ with $b_1$ replaced by $-u/2.$

\end{lemm}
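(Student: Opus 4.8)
The plan is to argue by contradiction, using the only property of $b_1$ available at this point: it is the abscissa of convergence of the Dirichlet series
\[ \Phi(s)\;=\;\sum_{\gamma}\ag\,\e^{-s\tg}, \]
which is exactly $(\ref{eq:1.5})$ with $q=1$. Since $\Phi$ has nonnegative coefficients and $\tg>0$ for every periodic ray, on the real axis one has $\Phi(\sigma)<\infty$ for $\sigma>b_1$ and $\Phi(\sigma)=+\infty$ for $\sigma<b_1$. Write $W(t)=\sum_{t-1/2\le\tg\le t+1/2}\ag$. If $(\ref{eq:4.6})$ fails, then the set $\{t:W(t)\ge\e^{(b_1-2\ep)t}\}$ is bounded, so there is $T_0$ with $W(t)<\e^{(b_1-2\ep)t}$ for all $t\ge T_0$, and in particular for every integer $n\ge T_0$. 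One should record here that each $W(n)$ is finite: by the counting bound $\sharp\{\gamma:\tau(\gamma)\le x\}\le\e^{(h+\ep)x}$ only finitely many periodic rays have period in a bounded interval, so $W(n)$ is a finite sum of finite terms.

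Next I would run a crude windowing estimate on $\Phi$. Cover $[0,\infty)$ by the unit intervals $I_n=[n-\tfrac12,n+\tfrac12]$, $n\ge0$, noting that each periodic ray lies in at most two of them. For $\tg\in I_n$ and real $s$ one has $\e^{-s\tg}\le\e^{-sn+|s|/2}$, so, tolerating the harmless double counting at the endpoints (which only enlarges the right side),
\[ \Phi(s)\;\le\;\e^{|s|/2}\sum_{n\ge0}\e^{-sn}W(n)\;\le\;C(s)+\e^{|s|/2}\sum_{n\ge T_0}\e^{(b_1-2\ep-s)n}, \]
where $C(s)=\e^{|s|/2}\sum_{0\le n<T_0}\e^{-sn}W(n)<\infty$ and the last estimate uses the contradiction hypothesis for $n\ge T_0$. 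The geometric tail on the right converges whenever $s>b_1-2\ep$, so $\Phi(s)<\infty$ for every real $s>b_1-2\ep$. But $\ep>0$, so the interval $(b_1-2\ep,\,b_1)$ is nonempty, and any $s$ in it gives convergence of $\Phi$ strictly to the left of $b_1$ --- contradicting that $b_1$ is the abscissa of convergence. This proves $(\ref{eq:4.6})$. The identical computation with $b_1-2\ep$ replaced by $-u/2$ handles $b_1=0$: were $W(t)<\e^{-(u/2)t}$ for all large $t$, then $\Phi(s)<\infty$ for all real $s>-u/2$, in particular at some $s\in(-u/2,0)$, which is again impossible; hence there is a sequence $t_j\to\infty$ with $W(t_j)\ge\e^{-(u/2)t_j}$, i.e.\ $(\ref{eq:4.6})$ with $b_1$ replaced by $-u/2$.

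I do not expect a serious obstacle here: the substance is the elementary fact that a uniform sub-$\e^{(b_1-2\ep)t}$ bound on the windowed masses $W(t)$ would push the abscissa of convergence of $\Phi$ below $b_1$. The only points needing a line of justification are the finiteness of each $W(n)$ (from the exponential orbit-counting bound), the bounded multiplicity of the covering $\{I_n\}$, and the remark that the negation of $(\ref{eq:4.6})$ is applied only at integer times. For this direct route the hypotheses ``$b_1\ne0$'' and ``$\ep$ small'' are not actually needed; they are what one would use to read the lemma off the classical Cahen--Landau formula for the abscissa of convergence of a Dirichlet series with nonnegative coefficients (in terms of partial sums when $b_1>0$ and of tails when $b_1<0$, which is where the sign of $b_1$ enters), and they also leave room to match the parameter $u$ in the case $b_1=0$ to whatever value of $\ep$ is convenient in the proof of Theorem~1.2.
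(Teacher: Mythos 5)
Your argument is correct, and it is a genuinely different --- and cleaner --- route than the paper's. The paper invokes the Cahen formula for the abscissa of convergence, which changes form depending on the sign of $b_1$ (expressed via tails $\sum_{T_n\le\tg}a_\gamma$ when $b_1<0$, via partial sums $\sum_{\tg\le T_n}a_\gamma$ when $b_1>0$, and via a perturbation $a_\gamma\mapsto a_\gamma\e^{-u\tg}$ when $b_1=0$), and in each case it extracts the sequence $t_j$ by an iterated ``find one bad window, shift, repeat'' argument. Your contrapositive avoids all of this: a uniform bound $W(t)<\e^{(b_1-2\ep)t}$ for all large $t$ immediately dominates the series $\Phi(s)=\sum_\gamma a_\gamma\e^{-s\tg}$ by a geometric series convergent for $s>b_1-2\ep$, contradicting the definition of the abscissa of convergence; the negation already hands you infinitely many $t$'s. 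The only ingredients are the windowing estimate $\e^{-s\tg}\le\e^{|s|/2}\e^{-sn}$ for $\tg\in[n-\tfrac12,n+\tfrac12]$ (correct for either sign of $s$), the finiteness of each $W(n)$ (which, as you say, follows from the counting bound or, even more directly, from $\Phi(\sigma)<\infty$ for any $\sigma>b_1$), and the elementary fact that a Dirichlet series with nonnegative coefficients diverges on the real axis to the left of its abscissa. In particular, your observation is right that the hypotheses ``$\ep$ small'' and ``$b_1\ne0$'' are artifacts of the paper's case analysis and not needed for this route; the $b_1=0$ case falls out of the same estimate by choosing $\ep$ against $u$. The one thing the paper's proof makes visible that yours hides is a constructive description of where the good windows sit (near certain $T_{n_j}$ realizing the $\limsup$); but since Theorem 1.2 only needs existence of the sequence, this is immaterial.
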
 

\begin{proof} We consider three cases.\\
{\bf Case 1.} $b_1 < 0.$\\
Let the lengths of the periodic rays be arranged as follows
\[T_{1} \leq T_{2} \leq ...\leq T_{n} \leq ....\]
It is well known (see for instance, \cite{cotton1917}) that
\[b_1 = \limsup_{n \to \infty}  \frac{ \log |\sum_{T_n \leq \tg} \ag |}{T_n}.\]
We fix a small $\epsilon > 0$ so that $- \delta = b_1 - 3\epsilon/2 < 0$.There exists an increasing  sequence $n_1 < n_2< ...< n_m <...$ such that $\lim n_j = + \infty$ and
\begin{equation} \label{eq:4.7}
\frac{ \log |\sum_{T_{n_j} \leq \tg} a_{\gamma} |}{T_{n_j}} \geq b_1 - \epsilon.
\end{equation}
 Choose $n_1$ large so that 
\[1 > e^{-\delta} + 2e^{-\frac{\ep}{2} T_{n_j}},\:  e^{\frac{\ep}{2} T_{n_j}}\geq e^{\delta/2}\: {\rm for}\:  j \geq m.\]
Set $q_1 = T_{n_1}$ and write
\[\sum_{q_1 \leq \tg} \ag = \sum_{k= 0}^{\infty} \sum_{q_1 +k \leq \tg <  q_1+ k + 1} \ag.\]
Assume that we have the estimates
\begin{equation}\label{eq:4.8}
\sum_{ q_1 + k \leq \tg < q_1 + k + 1} \ag \leq e^{-\delta (q_1 + k)},\: \forall k \geq 0.
\end{equation}
Then,
\[ \sum_{q_1 \leq \tg } \ag   \leq e^{-\delta q_1} \sum_{k = 0}^{\infty} e^{- k \delta}= e^{-\delta q_1} \frac{1}{1 - e^{-\delta}} < \frac{1}{2}e^{(-\delta + \epsilon/2) q_1} .\]
Since $- \delta + \epsilon/2 = b_1 - \epsilon,$ we obtain a contradiction with (\ref{eq:4.7}) for $T_{n_1}$. Consequently, there exists at least one $k_1 \geq 0$ such that
\begin{equation} \label{eq:4.9}
\sum_{ q_1 + k_1\leq \tg <  q_1 + k_1 + 1} \ag > e^{- \delta (q_1 + k_1)}.
\end{equation}

The series $\sum_{\tg \geq (k_1+1) q_1} \ag e^{- \lambda \tg}$ has the same abscissa of convergence $b_1$. We repeat the procedure choosing $q_2 >  q_1 + k_1 + 2$, and obtain the existence of $k_2 \geq 0$ such that
\begin{equation*} 
\sum_{ q_2 + k_2\leq \tg <  q_2+ k_2 + 1} \ag > e^{-\delta (q_2+ k_2)}.
\end{equation*}
 By iteration, we find two sequences $\{q_j\},\: \{k_j\}$ such that \[ q_{j+1} >  q_j + k_j + 2, \]
  and a sequence of disjoint intervals
 \[[q_j + k_j,  q_j + k_j + 1], \: j = 1, 2,...\]
so that 
\begin{equation} \label{eq:4.10}
\sum_{q_j + k_j\leq \tg \leq  q_j+ k_j + 1} \ag > e^{- \delta( q_j + k_j)}.
\end{equation}
The periods $q_j$ may change in the above procedure but for simplicity we use the same notation.
Choosing $t_j = q_j+ k_j + 1/2,$ we deduce (\ref{eq:4.6})

{\bf Case 2.} $b_1 > 0.$\\
For $b_1$  we have the representation
\[b_1 = \limsup_{n \to \infty} \frac{\log |\sum_{\tg \leq T_n} a_{\gamma} |}{T_n}.\]
We fix a small $\ep > 0$ so that $d_1 = b_1 - \frac{3}{2} \ep > 0.$
There exists an increasing  sequence $n_1 < n_2< ...< n_m <...$ such that $\lim n_j = + \infty,\: \frac{e^{d_1}}{e^{d_1} - 1} < e^{\frac{\ep}{2}[T_{n_1}]}$ and
\begin{equation} \label{eq:4.11}
\frac{ \log |\sum_{\tg \leq T_{n_j}} a_{\gamma} |}{T_{n_j}} \geq b_1 - \epsilon.
\end{equation}
We get
\[\sum_{\tg \leq T_{n_1}} a_{\gamma} \leq \sum_{k= 0}^{[T_{n_1}]} \sum_{k < \tg \leq k + 1} a_{\gamma}.
\]
 Assume that for $ k = 0,...,[T_{n_1}]$ we have
\[\sum_{k < \tg \leq k+1} a_{\gamma} \leq e^{d_1 k}.\]
This implies
\[\sum_{\tg \leq T_{n_1}} a_{\gamma} \leq \sum_{k = 0}^{[T_{n_1}]} e^{d_1 k}  = \frac{e^{([T_{n_1}] + 1)d_1} - 1}{e^{d_1} - 1} 
< \frac{e^{d_1} e^{-\frac{\ep}{2} [T_{n_1}]}}{e^{d_1} - 1} e^{(b_1 - \ep)T_{n_1}}.\]
and we obtain a contradiction with (\ref{eq:4.11}) and $T_{n_1}.$ Thus for some 
$ 0 \leq k_1 \leq [T_{n_1}] $ we have
\[\sum_{k_1 < \tg \leq k_1+1} a_{\gamma} > e^{(b_1 - \frac{3}{2} \ep)k_1}.\]
Following this procedure, we construct a sequence of integers $\{k_j\},\: k_{j+ 1} \geq T_{k_j}+ 1$ satisfying
\[\sum_{k_j \leq \tg \leq k_j + 1} a_{\gamma} > e^{(b_1- \frac{3}{2} \ep)k_j}.\]
and choosing  $t_j = k_j + 1/2$, we  arrange (\ref{eq:4.6}) for large $k_j.$ 

{\bf Case 3.} $b_1 = 0.$\\
For small $u > 0,$ consider the Dirichlet series
\[\eta_{u}(s) = \sum_{\gamma} \frac{\tg^{\sharp} e^{-(s + u)\tg}}{|\det(\mathrm{\id} - P_{\gamma})|} = \sum_{\gamma} a_{\gamma} e^{- u \tg}e^{-s\tg}.\]
This series has abscissa de convergence $ - u < 0$ and we may apply the results of  Case 1. For a suitable sequence $t_j \to \infty$ depending on $-u,$ we obtain the estimates
\[e^{- u(t_j - 1/2)} \sum_{t_j - 1/2\leq \tg  \leq t_j + 1/2} \ag  \geq \sum_{t_j - 1/2\leq \tg  \leq t_j + 1/2} \ag e^{- u \tg} \geq e^{(-u- 2\ep)t_j}.\]
Consequently,
\[\sum_{t_j - 1/2\leq \tg  \leq t_j + 1/2} \ag  \geq e^{-u/2}e^{-2\ep t_j} > e^{(-u/2 - 2 \ep)t_j}.\]
 \end{proof}

Going back to the proof of Theorem 1.2, assume first that $b_1 \neq 0.$ Therefore from (\ref{eq:4.4}) and (\ref{eq:4.5}) with $t = t_j$ we obtain
\begin{equation}\label{eq:4.15}
c_1 \sigma^{1/2} e^{2\alpha t_j}+ c_2 \sigma^{-(2d^2 + 2 d - 1 + \ep)} e^{-2(A - \ep)t_j} \geq e^{(b_1 - 2\ep)t_j}
\end{equation}
with constants $c_1, c_2 > 0$ independent of $t_j.$
Now choose
 \[\sigma = c_1^{-2} e^{2(b_1- 3\ep- 2\alpha)t_j} < 1.\] 
 Since
\[(b_1 - 3 \ep - 2\alpha) -(b_1 - 2\ep) + 2\ep  \leq  \ep,\]
we have
\[e^{-\ep t_j} + c_3 e^{-2(2d^2 + 2d -1/2+ \ep)(b_1 - 3 \ep - 2\alpha)t_j} e^{-2(A - (1/2)\ep) t_j }\geq 1.\]
Taking
\[A = -(2d^2 + 2d - 1/2)(b_1 - 2 \alpha)+ 3\ep(2d^2 + 2d - \frac{b_1- 2 \alpha}{3}+  \ep)\]
and letting $t_j \to +\infty$, we obtain a contradiction. Consequently, for some $0 \leq k_0 \leq 0,\: 0 \leq \ell_0 \leq d^2 - d$, setting $\tilde{\ep} = 3\ep(2d^2 + 2d - \frac{b_1 - 2 \alpha}{3} + \ep) + \ep$, we have
\[
\begin{aligned}
\sharp \{ \wt \mu \in \Res(-i \Pbf_{k_0, l_0}): \: \Im \wt \mu > (2d^2 + 2d- 1/2) (b_1- 2\alpha) - \tilde{\ep}\} = \infty.
\end{aligned}
\]
This implies (\ref{eq:1.6}) with $\ep$ replaced by $\tilde\ep$, observing that the poles $\wt \mu \in \Res(-i \Pbf_{k_0,\ell_0})$ coincide with the poles $\wt \lambda$ of the meromorphic continuation of
$\eta_1(-i \lambda)$. \\

For $b_1 = 0$ the estimates (\ref{eq:4.6}) hold with $b_1$ replaced by $-u/2$. The argument in the Case 1 implies
\[\sharp\{\mu_j \in \Res\: \eta_1:\:\Re \mu_j > (2d^2 +2 d -1/2)( -2\alpha)- (\ep + (2d^2 + 2d - 1/2) u/2)\} = \infty.\]
For small $u,$ we arrange $(2d^2 + 2d -1/2) u/2  <\ep,$ and since $\ep$ is arbitrary, we obtain (\ref{eq:1.6}) with $b_1 = 0.$ This completes the proof of Theorem 1.2. \end{proof}

\renewcommand{\theprop}{A.\arabic{prop}}
\renewcommand{\therem}{A.\arabic{rem}}  
\renewcommand{\theequation}{\arabic{section}.\arabic{equation}}
\section*{ Appendix }  \renewcommand{\theequation}{A.\arabic{equation}}
\setcounter{equation}{0}

  Here, we prove Proposition 1.2. 
  \begin{proof}[Proof of Proposition $1.2$]
  First,
 \[\det(\id - P_{\gamma}) = \det(\id - D_x \varphi_{T_{\gamma}}\vert_{E_s(x)}) \det(\id - D_x \varphi_{T_{\gamma}}\vert_{E_u(x)})\]
 \[= \det (D_x\varphi_{T_{\gamma}}\vert_{E_u(x)} )\det(\id - D_x \varphi_{T_{\gamma}}\vert_{E_s(x)})  \det(D_x \varphi_{-T_{\gamma}}\vert_{E_u(x)}- \id), \: x \in \gamma.\] 
 Consequently,
 \[ |\det(\id - P_{\gamma})|^{-1} = |\det D_x\varphi_{T_{\gamma}}\vert_{E_u(x)} |^{-1} \]
 \[\times|\det (\id -D_x\varphi_{T_{\gamma}}\vert_{E_s(x)}) |^{-1} |\det(\id-D_x \varphi_{-T_{\gamma}}\vert_{E_u(x)}) |^{-1}.\]
 For large $\tg$ we have
 \[\|D_x \varphi_{\tg}\vert_{E_s(x)}\| \leq C e^{-\delta \tg},\:\|D_x \varphi_{-\tg}\vert_{E_u(x)}\| \leq C e^{-\delta \tg},\: \delta > 0,\forall \tg\]
 with constants $ C >0, \: \delta > 0$ independent of $\tg$ since the flow $\varphi_t$ is uniformly hyperbolic (see \cite[Appendix A]{chaubet2022}). Thus, for large $\tg$ we obtain
 \begin{equation} \label{eq:A.1}
 c_1|\det D_x\varphi_{T_{\gamma}}\vert_{E_u(x)}|^{-1} \leq  |\det(\id - P_{\gamma})|^{-1} \leq C_1|\det D_x\varphi_{T_{\gamma}}\vert_{E_u(x)}|^{-1}
 \end{equation} 
 with $0 < c_1 < C_1$ independent of $\tg.$
 We have
 \[\det D_x\varphi_{T_{\gamma}}\vert_{E_u(x)} = e^{d_{\gamma}},\: x \in \gamma\]
 with 
 \[d_{\gamma} = \log \Bigl(\lambda_{1, \gamma}...\lambda_{d-1, \gamma}) > 0,\]
 where $\lambda_{j, \gamma}$ are the eigenvalues of $D_x \varphi_{\tg}\vert_{E_u(x)}$ with modulus greater than 1. The above estimate shows that the abscissa of convergence of the series
 \[
 \sum_{\gamma} \tg^{\sharp} e^{-s \tg + \delta_{\gamma}},\: \delta_{\gamma} = - d_{\gamma}, \: \Re s \gg 1
 \]
 coincides with $b_1$.

 Our purpose is to express $b_1$ by some dynamical characteristics related to symbolic dynamics for several disjoint strictly convex obstacles. To do this, we recall some well known results and we refer to \cite{ikawa1988zeta}, \cite{ikawa1988fourier}, \cite{ikawa1990poles}, \cite{Parry1990} for more details. Let $A(i, j)_{i, j = 1,...,r}$
be an $r \times r$ matrix such that
\[ A(i, j) = \begin{cases}  1 \:\: {\rm if}\: i \neq j,\\
0 \:\: {\rm if}\: i = j.\end{cases} \]
Introduce the spaces
\[\Sigma_A = \{ \xi = \{\xi_i\}_{i = -\infty}^{\infty},\:\:\xi_i \in \{1,,...,r\},\:\: A(\xi_i, \xi_{i +1}) = 1,\: \forall i \in \Z\},\]
 \[\Sigma_A^{+} = \{ \xi = \{\xi_i\}_{i = 0}^{\infty},\:\:\xi_i \in \{1,,...,r\},\:\: A(\xi_i, \xi_{j + 1}) = 1,\: \forall i \geq 0\}.\] 
 Given $0 < \theta < 1$, define a metric $d_{\theta}$ on $\Sigma_A$ by $d_{\theta}(\xi, \eta) = 0$ if $\xi = \eta$ and $d_{\theta}(\xi, \eta) = \theta^k$ if $\xi \neq \eta,$ and let $k$ be the maximal integer such that $\xi_i= \eta_i$ for $| i | < k.$ Similarly, we define a metric $d_{\theta}^{+}$ on $\Sigma_A^+.$ Following   \cite[Chapter 1]{Parry1990},  for a function $F: \Sigma_A \rightarrow \C,$ define
 \[{\rm var}_k F = \sup\{|F(\xi) - F(\eta)|: \: \xi_i = \eta_i,\: | i| < k\}\]
 and for $G: \Sigma_A^{+} \rightarrow \C,$ define
 \[{\rm var}_k G = \sup\{|G(\xi) - G(\eta)|: \: \xi_i = \eta_i,\: 0 \leq i < k\}.\]
 Let $F_{\theta}(\Sigma_A),\: F_{\theta}(\Sigma_A^+)$ be the set of Lipschitz functions with respect to metrics $d_{\theta}, d_{\theta}^+,$ respectively, with norm
 \[\|| f\||_{\theta} = \|f\|_{\infty} + \|f\|_{\theta},\: \|f\|_{\theta} = \sup_{k \geq 0} \frac{{\rm var}_k f}{\theta^k}.\]
  Let $\sigma_A$ be a shift on $\Sigma_A$ and $\Sigma_A^{+}$  given by
 \[(\sigma_A \xi)_i =\xi_{i+1},\:\forall i \in \Z,\: (\sigma_A \xi)_i = \xi_{i+1}, \: \forall i \geq 0 ,
 \]
 respectively. For every $\xi \in \Sigma_A$ there exists a unique reflecting ray $\gamma(\xi)$ with successive reflections points on $....\partial D_{j-1}, \partial D_j, \partial D_{j + 1},...$, where the order of reflections is determined by the sequence $(\xi)$  (see \cite{ikawa1988fourier}). If $(P_j(\xi))_{j = -\infty}^{\infty}$ are the reflexion points of $\gamma(\xi)$, we define the function
 \[f(\xi) = \|P_0(\xi) - P_1(\xi)\|.\] 
 It was proved in \cite[Section 3]{ikawa1988fourier} and \cite[Section 3]{petkov2010resolvent} that one can construct a sequence of phase functions
 $\{\varphi_{\xi, j}(x)\}_{j = -\infty}^{\infty},$
 such that for each $j,$ the phase $\varphi_{\xi, j}$ is smooth in a neighborhood ${\mathcal U}_{\xi, j}$ of the segment $[P_j(\xi), P_{j+1}(\xi)]$ in $\R^d \setminus \mathring D$ and\\
 
 (i) $\|\nabla \varphi_{\xi, j}(x)\| = 1$ on ${\mathcal U}_{\xi, j}$,\\
 
 (ii) $\nabla\varphi_{\xi, j}(P_j(\xi))= \frac{P_{j+1}(\xi)- P_j(\xi)}{\|P_{j+1}(\xi)- P_j(\xi)\|},$\\
 
 (iii) $\varphi_{\xi, j} = \varphi_{\xi, j+1} $ on $\partial D_{j+1} \cap {\mathcal U}_{\xi, j} \cap {\mathcal U}_{\xi, j+1},$\\
 
 (iv) for each $x \in {\mathcal U}_{\xi, j}$ the surface $C_{\xi, j}(x) = \{y \in {\mathcal U}_{\xi, j}:\: \varphi_{\xi, j}(x) = \varphi_{\xi, j}(y)\}$ is strictly convex with respect to its normal fields $\nabla \varphi_{\xi, j}.$

 Denote by $\kappa_j(\xi) ,\: j = 1,...,d-1,$ the principal curvatures at $P_0(\xi)$ of $C_{\xi, 0}(x)$ 
 and introduce
 \[g(\xi) = - \log \prod_{j = 1}^{d-1} (1 + f(\xi)\kappa_j(\xi)).\] 
 Then,
 \[\prod_{j= 1}^{d-1} \lambda_{j, \gamma(\xi)} = \prod_{k = 1}^{m(\gamma(\xi))}\prod_{j = 1}^{d-1}( 1 + f(\sigma_A^k\xi)\kappa_j(\sigma_A^k\xi)).\]

 It follows form the exponential instability of the billiard ball map (see \cite{ikawa1988fourier}, \cite{stoyanov1999instability}) that $f(\xi), g(\xi)$ become functions in $F_{\theta}(\Sigma_A)$ with $0 < \theta < 1$ depending on the geometry of $D$. 
 We define
 \[S_n h(\xi) = h(\xi) + h(\sigma_A \xi) +...+h(\sigma_A^{n-1}\xi)\]
 and for a periodic ray $\gamma(\xi),$ we obtain
 \[T_{\gamma(\xi)} = S_{m(\gamma(\xi))} f(\xi),\:\delta_{\gamma(\xi)} = S_{m(\gamma(\xi))} g(\xi).\]
 Consider the zeta function
 \[Z(s) = \Bigl(\sum_{n=1}^{\infty} \frac{1}{n} \sum_{\sigma_A^n\xi = \xi} e^{S_n(-s f(\xi) + g(\xi))}\Bigr), \: \Re s \gg 1\]
 and observe that 
 \[- \frac{d}{ds} Z(s) = \sum_{\gamma} \tg^{\sharp}e^{-s\tg + \delta_{\gamma}}.\]
 Next, it is well known (see for instance \cite[Chapter 1]{Parry1990}) that given $h \in F_{\theta}(\Sigma_A),$ there exist functions $\tilde{h}, \chi \in F_{\theta^{1/2}}(\Sigma_A)$ such that
 \[h(\xi) = \tilde{h}(\xi) + \chi(\sigma_A \xi) - \chi(\xi)\]
 and $\tilde{h}(\xi) \in F_{\theta^{1/2}}(\Sigma_A^+)$ depends only on the coordinates $(\xi_0,\xi_1,...).$ We denote this property by $h \sim \tilde{h}.$ Choose $\tilde{f} \sim f,\: \tilde{g} \sim g$ with $\tilde{f}, \tilde{g} \in F_{\theta^{1/2}}(\Sigma_A^+)$ and write 
 \[Z(s) = \Bigl( \sum_{n=1}^{\infty} \frac{1}{n} \sum_{(\sigma_A^+)^n\xi = \xi} e^{S_n(-s \tilde{f}(\xi) + \tilde{g}(\xi))}\Bigr).\] 
  The pressure $\P(F)$ of a function $F \in C(\Sigma_A)$ is defined by
 \[\P(F) = \sup_{\nu} \Bigl(h(\nu, \sigma_A) + \int_{\Sigma_A} F d \nu\Bigr),\]
 where $h(\nu, \sigma_A)$ is the measure entropy of $\sigma_A$ with respect to $\nu$ and the supremum is taken over all probability measures $\nu$ on $\Sigma_A$ invariant with respect to $\sigma_A$.
 
 Following \cite[Chapter 6]{Parry1990}, consider the suspended flow $\sigma^f_t (\xi, s) = (\xi, s + t)$ 
  on the space
 \[\Sigma_A^f = \{(\xi, s):\: \xi \in \Sigma_A,\: 0 \leq s \leq f(\xi)\}\]
 with identification $(\xi, f(\xi)) \sim(\sigma_A(\xi), 0).$ For a function $G \in C(\Sigma_A^f),$ define the pressure
 \begin{equation} \label{eq:A.2}
 \P(G) = \sup_{\nu_f}\{ h(\nu_f, \sigma^f_t) + \int_{\Sigma_A^f} G d \nu_f\},
 \end{equation}
 where $h(\nu_f, \sigma^f_t)$ is the measure entropy and the supremum is taken over all probability measures $\nu_f$ on $\Sigma_A^f$ invariant with respect to $\sigma^f_t.$  
  The suspended flow $\sigma^f_t$ is weakly mixing, if there are no $t \in \R \setminus \{0\}$ with the property
 \[\frac{t}{2\pi} f(\xi) \sim M(\xi),\]
 where $M(\xi)\in C(\Sigma_A: \Z)$ has only integer values.  According to \cite[Lemma 5.2]{stoyanov1999instability} and \cite[Lemma 1]{petkov1999zeta}, the flow $\sigma^f_t$ is {\it weakly mixing}.
 
 Applying the results of \cite[Chapter 6]{Parry1990}, we deduce that the abscissa of convergence $b_1$ of $Z(s)$ is determined as the root of the
 equation $\P(- s \tilde{f} + \tilde{g}) = \P(- s f + g) = 0$ with respect to $s$. This root is unique since $s \rightarrow \P(- sf + g)$ is decreasing.
 Introduce the function
 \begin{equation} \label{eq:A.3}
 G(\xi, y) = -\frac{1}{2}\sum_{j = 1}^{d-1} \frac{\kappa_j(\xi)}{1 + \kappa_j(\xi) y}.
 \end{equation}
 Clearly,
 \[g(\xi) = 2\int_0^{f(\xi)} G(\xi, y) dy.\]
 Then \cite[Proposition 6.1]{Parry1990} says that $\P(-b_1 f + g) = 0$ is equivalent to $b_1 = \P(2G).$ With the same argument we show that $a_1 = \P(G).$ This completes the proof of Proposition 1.2. \end{proof}
 
It easy to find a relation between $\P(2G)$ and $\P(g)$. Repeating the argument of \cite[Section 3]{petkov2010resolvent}, one obtains that there exist probability measures $\nu_g,\:\nu_0$ on $\Sigma _A$ invariant with respect to $\sigma_A$ such that
\[ \frac{\P(g)}{\int f(\xi) d \nu_g} \leq b_1 \leq \frac{\P(g)}{\int f(\xi) d\nu_0}.\]
 Consequently, $b_1$ has the same sign as $\P(g).$
 
We close this Appendix by proving that $b_1 = b_2.$ Consider the zeta function
 \[Z_1(s) =  \sum_{n=1}^{\infty} \frac{1}{n} \sum_{(\sigma_A^+)^n\xi = \xi} e^{S_n(-s \tilde{f}(\xi) + \tilde{g}(\xi) + i \pi)}\]  
 related to  (\ref{eq:1.9}). Introduce the complex Ruelle operator
 \[(\mathcal L_s u) (\xi) = \sum_{\sigma_A^+ \eta = \xi} e^{ (- s \tilde{f} + \tilde{g} + i \pi)(\eta)}u(\eta), \: u \in F_{\theta^{1/2}}(\Sigma_A^+).\]
 Then for $s = b_1,$ this operator has no eigenvalues 1 since this implies that the operator
 \[( L_{b_1} u) (\xi) = \sum_{(\sigma_A^+) \eta = \xi} e^{ (- b_1 \tilde{f} + \tilde{g})(\eta)} u(\eta)\]
 will have eigenvalue (-1). This is impossible because from $\P(-b_1 \tilde{f} + \tilde{g}) = 0,$ one deduces that $L_{b_1} $ has eigenvalue 1 and all other eigenvalues of $L_{b_1}$ have modulus strictly less than 1 (see \cite[Theorem 2.2]{Parry1990}).This shows that the function $Z_1(s)$ is analytic for $s =b_1$, hence (\ref{eq:1.9}) has the same property. Finally, similarly to (\ref{eq:1.3}), we write the function (\ref{eq:1.9}) as a difference of two Dirichlet series with abscissas of convergence $b_1$ and $b_2$. Therefore the inequality $b_2 < b_1$ leads to contradiction.
 
  \subsection*{Acknowledgements.} We would like to thank Long Jin, Frédéric Naud and Zhongkai Tao  for the useful and stimulating discussions. Thanks are due to referee for his/her valuable remarks and suggestions.
  
\bibliographystyle{alpha}
\bibliography{bib.bib}

\end{document}